\newtheorem{theorem}{Theorem}
\newtheorem{proposition}[theorem]{Proposition}
\newtheorem{lemma}[theorem]{Lemma}
\newtheorem{conjecture}[theorem]{Conjecture}
\theoremstyle{definition}
\newtheorem{definition}[theorem]{Definition}
\newtheorem{example}[theorem]{Example}
\newtheorem{remark}[theorem]{Remark}
\definecolor{lightblue}{rgb}{0.8,0.8,1.0}
\definecolor{lightgreen}{rgb}{0.8,1.0,0.8}
\definecolor{pBlue}{RGB}{86,139,190}
\definecolor{pCyan}{RGB}{149,186,201}
\definecolor{pSand}{RGB}{184,166,121}
\definecolor{pAlgae}{RGB}{87,115,135}
\definecolor{pSkin}{RGB}{236,216,167}
\definecolor{pGray}{RGB}{156,175,156}
\definecolor{pPink}{RGB}{215,114,127}
\definecolor{pOrange}{RGB}{211,153,80}
\tikzset{
dot/.style = {circle, fill, minimum size=#1,
              inner sep=0pt, outer sep=0pt},
dot/.default = 6pt 
}
\newcommand{\defin}[1]{%
\relax\ifmmode%
\textcolor{blue}{#1}%
\else\textcolor{blue}{\emph{#1}}%
\fi%
}
\newcommand{\thsup}{\textnormal{th}}
\renewcommand{\rook}{\raisebox{-0.1em}{\symrook}}
\newcommand{\setZ}{\mathbb{Z}}
\tikzset{every picture/.append
	style={
		scale=1,
		x=1em,
		y=1em,
		entries/.style={xshift=-0.5em,yshift=-0.5em,font=\small},
		thickLine/.style={line width=1.4pt,line join=round},
		bgEntry/.style={xshift=-0.5em,yshift=-0.5em,
			regular polygon,regular polygon sides=4,fill,inner sep=0pt,minimum size=1.35em
		}
	}
}
\DeclareMathOperator{\des}{des}
\DeclareMathOperator{\peak}{peak}
\DeclareMathOperator{\id}{id}
\DeclareMathOperator{\exc}{exc}
\DeclareMathOperator{\asc}{asc}
\DeclareMathOperator{\Asc}{Asc}
\newcommand{\symS}{\mathfrak{S}} 
\newcommand{\preceqdot}{\mathrel{\mathpalette\pr@ceqd@t\relax}}
\newcommand{\pr@ceqd@t}[2]{%
  \begingroup
  \sbox\z@{$#1\prec$}\sbox\tw@{$#1\preceq$}%
  \dimen@=\dimexpr\ht\tw@-\ht\z@\relax
  {\preceq}%
  \mkern-5mu
  \raisebox{\dimen@}{$\m@th#1\cdot$}%
  \endgroup
}
\newcommand{\xvec}{\mathbf{x}}
\newcommand{\svec}{\mathbf{s}}
\newcommand{\evec}{\mathbf{e}}
\title{Real-rootedness of rook-Eulerian polynomials}
\author[Per Alexandersson]{Per Alexandersson}
\address{Department of Mathematics, Stockholm University, SE-106 91 Stockholm, Sweden}
\email{per.w.alexandersson@gmail.com}
\author[Aryaman Jal]{Aryaman Jal}
\address{Department of Mathematics, KTH Royal Institute of Technology
SE-100 44 Stockholm, Sweden}
\email{aryaman@kth.se}
\author[Maena Quemener]{Maena Quemener}
\address{Department of Mathematics, École normale supérieure Paris-Saclay
4, avenue des Sciences
91190, Gif-sur-Yvette}
\email{maena.quemener@ens-paris-saclay.fr}
\keywords{Ferrers board, rook placements, Bruhat order, real-rootedness}
\subjclass[2020]{05A15, 05A15, 26C10}
\begin{document}

\begin{abstract}
   We introduce rook-Eulerian polynomials, a generalization of the classical Eulerian polynomials arising from complete rook placements on Ferrers boards, and prove that they are real-rooted. We show that a natural context in which to interpret these rook placements is as lower intervals of $312$-avoiding permutations in the Bruhat order. We end with some variations and generalizations along this theme.   
\end{abstract}

\maketitle 
In this note, we study rook-Eulerian polynomials, a generalization of the classical Eulerian polynomials, which arise from complete rook placements
on Ferrers boards. Using the framework of sequences of interlacing polynomials,
we prove that rook-Eulerian polynomials are real-rooted. We then revisit a folklore fact: complete rook placements on Ferrers boards have a natural poset structure, namely the lower order ideal of a $312$-avoiding permutation in the Bruhat order. The $312$-avoiding nature of the permutation 
is necessary for the real-rootedness to hold. We contrast rook-Eulerian polynomials with $\svec$-Eulerian polynomials, and show that the two families overlap without coinciding with one another. Finally we consider descent-generating polynomials of lower intervals in the weak order on $\symS_{n}$ and show that while they are not real-rooted in general, are likely to have ultra-log-concave coefficients.

\section{Preliminaries}

A \defin{board} $B$ is a subset of the square grid $[n] \times [n]$. By a \defin{non-attacking} placement of rooks on a board, we mean a subset $S \subset B$ such that no two elements of $S$ share the same $x$-coordinate or $y$-coordinate. This corresponds to a placement of rooks on $B$ such that no two rooks
can take each other in the sense of chess. Hereafter, the boards in question will arise from partitions $\lambda = (\lambda_{1}, \ldots , \lambda_{n})$ with $\lambda_{1} \leq \lambda_{2} \leq \ldots \leq \lambda_{n}$. We will use the words partition and \defin{Ferrers board} interchangeably. We use the French convention for drawing Ferrers boards: left-justified arrays 
of boxes are drawn such that the first has $\lambda_{1}$ cells, the second below it has $\lambda_{2}$ cells and so on. The columns of the board are labeled from $1$ to $n$ from left to right.

\begin{definition}
A \defin{complete rook placement} on a Ferrers board is a rook placement such 
that each column and row contains exactly one rook. 
    
We note that in order to have at least one  complete rook placement, $\lambda$ must satisfy $\lambda_{i} \geq i$ for all $i \in [n]$. It will be useful to view complete rook placements as permutations. To that end, given a complete rook placement on a shape $\lambda$ with $n$ parts, define a 
permutation $\sigma \in \symS_n$, with $\sigma_{i}$ equal to the index 
of the column of the rook placed in the $i^\thsup$ row. See Figure~\ref{fig:complete_rook_perm_eg} for an example. 
\end{definition}

The full permutation group carries two natural and extensively studied poset structures that we recall below.  

\begin{figure}[!ht]
\ytableausetup{boxsize=1.1em}
\[    
\begin{ytableau}
 \none[1] & \none[2] & \none[3] &  \none[4] \\
 \, &\rook \, & & *(lightgray) \\
 &  \, & \, &\rook \,\\
   & & \, \rook & \, \\
   \rook & \, & \, & \, \\
\end{ytableau}
\]
\caption{The complete rook placement $\sigma = 2431$ on $\lambda = 3444$.}
\label{fig:complete_rook_perm_eg}
\end{figure}

\begin{definition}[Weak and strong order]
The \defin{weak order} on $\symS_n$ is a partial order,
where the cover relations are defined as follows.
The permutation $\pi$ covers $\sigma$
if $\sigma$ can be obtained from $\pi$ by an \emph{adjacent}
transposition that decreases the number of inversions. An adjacent transposition refers to a swap of adjacent elements in $\sigma$. 
We write \defin{$\sigma \leq_W \pi$} if $\sigma$
is less than or equal to $\pi$ in the weak order.

Similarly, the \defin{strong order} or \defin{Bruhat order} is 
defined in the same manner, but here the transposition
need not be adjacent.
We denote this relation by \defin{$\sigma \leq_B \pi$}.

The maximal element in both orders 
is unique, and given by the \defin{longest permutation}
\defin{$\omega_0 = [n,\dotsc,2,1]$}.
\end{definition}

Recall that a permutation $\sigma$ contains  the pattern $312$ if there exists $i<j<k$ such that $\sigma_{j}<\sigma_{k}<\sigma_{i}$. If this does not hold, we say that $\sigma$ is~\defin{$312$-avoiding}. 

The following proposition is a folklore fact in Catalan combinatorics; we include a proof for completeness. Rows and columns of $B$ are indexed by coordinates $(i, j) \in [n] \times [n]$. A pair of rooks $(i, j)$ and $(k, \ell)$ is \defin{non-nested} if $i<k$ and $\ell < j$, and nested otherwise. In terms of rooks on the board, non-nesting means means that one rook does not occur South-East of another. Given a pair of non-nested rooks $(i, j)$ and $(k, \ell)$ define the \defin{switch move}\footnote{Develin uses the same terminology but with $\lambda$ drawn according to the English convention~\cite{Develin2006RookEquiv}.} on rook $(i, j)$ to be the swap that replaces the given pair of rooks with $(k, j)$ and $(i, \ell)$. Similarly, define the \defin{inverse switch move} to be the swap that replaces a pair of nested rooks with their non-nesting counterpart.   

\begin{proposition}\label{prop:312_to_complete_rook}
Every $312$-avoiding permutation $\sigma \in \symS_n$ corresponds 
to a Ferrers board $\lambda = (\lambda_1,\dotsc,\lambda_n)$ with $\lambda_i \geq i$ for $i=1, \ldots, n-1$ and $\lambda_{n} = n$, and vice-versa.
The permutations in the interval 
\[
\{\pi \in \symS_{n}: \mathrm{id} \leq_{B} \pi \leq_{B} \sigma \}
\]
are exactly the complete rook placements on the Ferrers board $\lambda$.
\end{proposition}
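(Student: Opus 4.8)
The plan is to translate the statement into a clean combinatorial form and then exploit the interplay between switch moves and Bruhat covers. Since the board is left-justified and row $i$ occupies columns $1,\dots,\lambda_i$, a complete rook placement is exactly a permutation $\pi$ with $\pi_i\le\lambda_i$ for all $i$; call this set the \emph{box} $B_\lambda:=\{\pi\in\symS_n:\pi_i\le\lambda_i\text{ for all }i\}$, which is nonempty precisely because $\lambda_i\ge i$. I would make the bijection explicit by two maps. From a board $\lambda$ produce $\sigma=\Phi(\lambda)$ by the greedy rule: scanning $i=1,\dots,n$, let $\sigma_i$ be the largest value not yet used that is $\le\lambda_i$. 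From a $312$-avoiding permutation $\sigma$ produce the board $\Psi(\sigma)=\lambda$ by the running maximum $\lambda_i:=\max(\sigma_1,\dots,\sigma_i)$. The vector $\Psi(\sigma)$ is automatically weakly increasing with $\lambda_i\ge i$ and $\lambda_n=n$, so it is a legal board.

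Next I would verify that $\sigma=\Phi(\lambda)$ is $312$-avoiding and that $\Phi,\Psi$ are mutually inverse. For $312$-avoidance, observe that in the greedy construction an ascent $\sigma_j<\sigma_k$ with $j<k$ can occur only because $\sigma_k$ was too large to place earlier, i.e.\ $\sigma_k>\lambda_j$; monotonicity of $\lambda$ then gives $\sigma_k>\lambda_j\ge\lambda_i\ge\sigma_i$ for every $i<j$, ruling out a pattern $\sigma_j<\sigma_k<\sigma_i$. The identity $\Psi(\Phi(\lambda))=\lambda$ follows because the value $\lambda_i$ is always free and $\le\lambda_i$, so greedy must place it no later than position $i$; and $\Phi(\Psi(\sigma))=\sigma$ is exactly where $312$-avoidance is used: if at position $i$ a larger admissible value $v>\sigma_i$ were still free, it would occupy some $k>i$, and a prefix position $p<i$ with $\sigma_p>v$ would produce a forbidden $312$-pattern at $p<i<k$.

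The core of the proposition is the identity $B_\lambda=\{\pi:\mathrm{id}\le_B\pi\le_B\sigma\}$, which I would prove by two inclusions via switch moves. For $[\mathrm{id},\sigma]_B\subseteq B_\lambda$, I claim $B_\lambda$ is a lower order ideal: a Bruhat cover below $\pi$ is a transposition of positions $i<k$ with $\pi_i>\pi_k$, i.e.\ a switch move on a non-nested pair, and after the switch the value moved into the later row $k$ is $\pi_i\le\lambda_i\le\lambda_k$, so the placement stays on the board; hence descending from $\sigma\in B_\lambda$ never leaves $B_\lambda$. For $B_\lambda\subseteq[\mathrm{id},\sigma]_B$, I would show $\sigma$ is the Bruhat-maximum of $B_\lambda$: from any $\pi\in B_\lambda$ with $\pi\ne\sigma$ there is an \emph{on-board inverse} switch, namely an ascent $i<k$, $\pi_i<\pi_k$, with $\pi_k\le\lambda_i$ (its absence would force $\pi$ to obey the greedy rule and hence equal $\sigma$). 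Each such inverse switch is a length-\emph{increasing} transposition, so it moves strictly up in Bruhat order; iterating terminates at the unique placement admitting no on-board inverse switch, namely $\sigma$, and exhibits a chain $\pi<_B\cdots<_B\sigma$.

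I expect the main obstacle to be the second inclusion, i.e.\ proving that the box is a \emph{principal} ideal whose top element is the explicit permutation $\sigma$. The order-ideal direction is a one-line consequence of $\lambda$ being weakly increasing, but pinning down the maximum requires two ingredients: the structural fact that a single length-increasing transposition already yields a Bruhat relation, so that the chain of inverse switches certifies $\pi\le_B\sigma$; and the uniqueness argument identifying the greedy permutation as the only placement with no admissible inverse switch. It is precisely here that monotonicity of $\lambda$—equivalently the $312$-avoidance of $\sigma$—is indispensable: it is what keeps switch moves on the board and reduces the obstruction to an inverse switch to the single inequality $\pi_k\le\lambda_i$, cleanly separating the interior of the interval from its complement.
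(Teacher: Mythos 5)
Your proposal is correct and takes essentially the same approach as the paper: the same greedy/running-maximum bijection between boards and $312$-avoiding permutations, the same observation that switch moves (length-decreasing transpositions) stay on the board to get $[\mathrm{id},\sigma]_B \subseteq B_\lambda$, and the same use of length-increasing inverse switches to climb from an arbitrary placement up to $\sigma$. The only cosmetic difference is in the second inclusion, where you terminate the climb via a uniqueness argument (the greedy placement is the only one admitting no on-board inverse switch) while the paper sorts the rows toward $\sigma$ one at a time; both rest on the identical facts.
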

\begin{proof}
For $n \geq 1$, let $\mathfrak{S}_{312}$ and $\Lambda$ respectively be the set of $312$-avoiding permutations on $[n]$, and the Ferrers diagrams $\lambda$ fitting inside an $[n] \times [n]$ grid satisfying $\lambda_{i} \geq i$. 
We define a bijection from $\mathfrak{S}_{312}$ to $\Lambda$ as follows. Given $\sigma \in \mathfrak{S}_{312}$ define a 
complete rook placement $\rho$ on $[n] \times [n]$ by 
\[
    \rho =\{(i, \sigma_{i}): i \in [n]\}
\]
and let $\lambda$ be the 
Ferrers boards defined such that the last cell of every row of $\lambda$ either contains a rook of $\rho$ or has a rook of $\rho$ above it in the same column. 
The fact that $\sigma$ is a permutation immediately implies that $\lambda_{i} \geq i$.
Further, every such $\lambda$ arises from a complete rook placement: given $\lambda$, define $\rho'$ by placing 
a rook in $(1, \lambda_{1})$ and for $i \in [2, n]$ place a rook as far to the right as possible such that it does not attack the rooks that have already been placed in rows $1$ to $i-1$. This is clearly a complete rook placement on $\lambda$; let $\sigma'$ be the permutation corresponding to $\rho'$.

\textbf{Claim:} The permutation $\sigma$ avoids $312$. 

To see this, note that the presence of a $312$ pattern in $\sigma'$ corresponds to three rooks in $\rho'$ 
in a configuration as in Figure~\ref{fig:312_as_rooks} (after deleting the rows and columns not involved in the $312$ pattern). This is a contradiction since by construction, 
the rooks in each row of $\rho'$ are placed as far to the right as possible without attacking the rooks above that row.

For the second claim, fix $\sigma \in \symS_{312}$ and let $\mathcal{R}_{\lambda}$ be the set of complete rook placements on $\lambda$, where $\rho_{\max}$ is the rook placement corresponding to $\sigma$. Let $f:[\mathrm{id}, \sigma]_{B} \to \mathcal{R}_{\lambda}$ be the map that associates a permutation $\pi$ with its complete rook placement $\rho_{\pi}$ on $[n] \times [n]$. We need to show that $\rho_{\pi}$ lies on $\lambda$. Since $\pi \leq_{B} \sigma$, $\pi$ can be obtained from $\sigma$ by a series of transpositions. For every such transposition $(i, j)$, perform the corresponding switch move to the rooks in rows $i$ and $j$ in the rook placement. The resulting rook placement is $\rho_{\pi}$. This shows that $\rho_{\pi}$ lies in $\mathcal{R}_{\lambda}$. In the opposite direction, given a rook placement $\rho$ on $\lambda$, let the corresponding permutation be $\pi$. We need to show that $\pi \leq_{B} \sigma$. We will first show that we can obtain $\rho_{\max}$ from $\rho$ as follows: for every row $i$ from $1$ to $n$, if $\rho$ has no rook in $(i, \sigma_{i})$, perform the inverse switch  the nested rooks $(i, \pi_{i})$ and $(j, \sigma_{i})$ where $j>i$ is the row index containing a rook of $\rho$ in column $\sigma_{i}$. (This pair of rooks is indeed nested because $\pi_{i}< \sigma_{i}$, which holds since the rook of $\rho_{\max}$ in row $i$ is placed as far to the right as possible.) The rook placement obtained after this procedure is clearly $\rho_{\max}$. By performing the same sequence of transpositions corresponding to the inverse switch moves, we can obtain $\sigma$ from $\pi$, which shows that $\pi\leq_{B} \sigma$. See Figure~\ref{fig:rookInterval} for an example of this correspondence. 
\end{proof}

One can show that the correspondence in the second half of the theorem holds more generally for complete rook placements on skew shapes,  see~\cite[Proposition 12]{Sjostrand2007} for instance.
    
\begin{figure}[!ht]
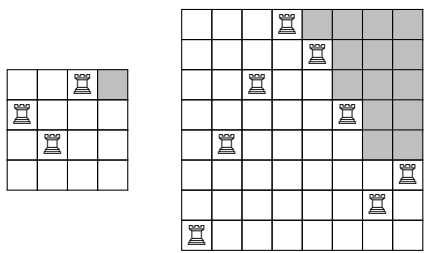

\ytableausetup{boxsize=1.1em}
\[    
\begin{ytableau} 
 \, &  \, &\rook & *(lightgray) \\
\rook &  \, & \, & \,\\
   &  \rook & \, & \, \\
   &  \, & \, & \, \\
\end{ytableau}
\qquad 
\begin{ytableau}
\; & \; & \; & \rook & *(lightgray)& *(lightgray)& *(lightgray)& *(lightgray) \\    
\; & \; & \; & \; &  \rook & *(lightgray)& *(lightgray)& *(lightgray) \\
\; & \; & \rook & \; &  \; & *(lightgray)& *(lightgray)& *(lightgray) \\
\; & \; & \; & \; &  \; & \rook & *(lightgray)& *(lightgray) \\
\; & \rook & \; & \; &  \; &  & *(lightgray)& *(lightgray) \\
\; & \; & \; & \; &  \; &  &  & \rook \\
 & \; & \; & \; &  \; &  & \rook & \; \\
\rook & \; & \; & \; &  \; &  &  & \; \\
\end{ytableau}
\]
\caption{Left: The $312$-pattern in a rook configuration.
Right:
The complete rook placement on the Ferrers board 
$45566888$ corresponds to the 312-avoiding permutation $45362871$. 
Every rook placement on this board can be obtained by switching pairs of non-nested rooks in the placement above. 
}\label{fig:312_as_rooks}
\end{figure}

The correspondence in Proposition~\ref{prop:312_to_complete_rook} 
is illustrated in Figure~\ref{fig:rookInterval}.
\begin{figure}[!ht]
\centering
\includegraphics[width=0.5\textwidth]{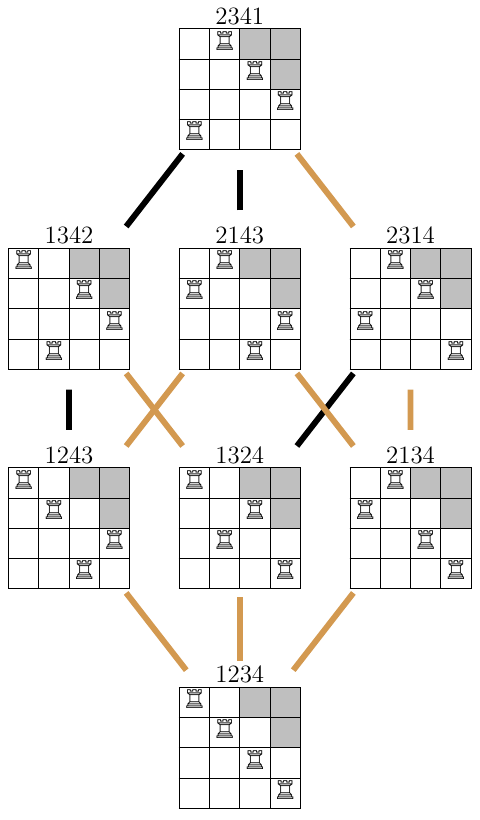}
\caption{The bijection between rook placements on a Ferrers board $\lambda = 2344$ 
and an interval $\{\sigma \in \symS_{4}: \text{id} \leq_{B} \sigma_{B} \leq 2341 \}$ in the Bruhat order. Black edges represent cover relations corresponding to arbitrary transpositions --- or applications of the switch move at the level of the rooks --- 
while yellow edges represent cover relations corresponding to adjacent transpositions.
}\label{fig:rookInterval}
\end{figure}

\begin{example}\label{eg:rook_perm_eg}
Below is a complete rook placement on the board $45566888$ with permutation $\sigma=31524687$.
\[
\begin{ytableau}
\; & \; & \rook & \; & *(lightgray)& *(lightgray)& *(lightgray)& *(lightgray) \\    
\rook & \; & \; & \; &  \; & *(lightgray)& *(lightgray)& *(lightgray) \\
\; & \; & \;& \; &  \rook& *(lightgray)& *(lightgray)& *(lightgray) \\
\; & \rook & \; & \; &  \; & \;& *(lightgray)& *(lightgray) \\
\; & \;& \; & \rook  &  \; &  & *(lightgray)& *(lightgray) \\
\; & \; & \; & \; &  \; & \rook &  &  \\
 & \; & \; & \; &  \; &  & \; & \rook  \\
\; & \; & \; & \; &  \; &  & \rook  & \; \\
\end{ytableau}
\]
\end{example}

\begin{definition}
An \defin{ascent} in a complete rook placement $\sigma$ is an index $i \in [n-1]$ such that $\sigma_i < \sigma_{i+1}$. 
In other words, the rook in row $i+1$ lies strictly to the right of the rook in row $i$.
We also extend the notion of ascents to words, i.e., if $w_1 w_2 \dotsc w_\ell$ is a word, then $i$ is an \defin{ascent of the word} if $w_i < w_{i+1}$.
\end{definition}

\begin{example}
The rook placement $\sigma = 31524687$ from Example~\ref{eg:rook_perm_eg} has $2,4,5$ and $6$ as ascents.
\end{example}

We denote by $\pi_{\lambda}$ the $312$-avoiding permutation corresponding to $\lambda$. By Proposition~\ref{prop:312_to_complete_rook}, the set of complete rook placements on $\lambda$ is the interval $[\id, \pi_{\lambda}]$ in the Bruhat order. The \defin{rook-Eulerian} polynomial of $\lambda$ is defined as
\[
    \defin{Q^{\lambda}(t)} \coloneqq \sum_{\sigma \in [\id,  \pi_{\lambda}]_{B}} t^{\asc(\sigma)}.
\]
When $\lambda = (n, \dots, n)$, the corresponding interval is the full permutation group $\symS_n$ and 
the associated $Q^\lambda$ is the Eulerian polynomial $A_{n}(t)$, the real-rootedness of which is well-known~\cite{Frobenius1910}.

This polynomial has arisen in the context of rook theory before. For example, $Q^{\lambda}(t)$ can be obtained as the $q=1$ specialization of the polynomial $t^{-1}A_{F}(t, q)$ 
considered in \cite{SongYan2012DescentsFerrers}, where $F$ is the Ferrers board $\lambda$. The main result of \cite{SongYan2012DescentsFerrers} states that $A_{F}(t,q)$ can be expressed as a 
permanent of a matrix the entries of which are area-generating polynomials of lattice paths. In a different context, evaluating $Q^{\lambda}$ at $t = 1$ and taking the square yields the number 
of Hamiltonian paths in the Ferrers graph corresponding to $\lambda$ \cite[Theorem 3.3]{EhrenborgVanWilligenburgFerrersGraphs2004}.

\section{Real-rootedness of rook-Eulerian polynomials}\label{section:rr_rook_eulerian}

In this section, we prove the real-rootedness of $Q^{\lambda}(t)$
and some generalizations of this family of polynomials. We first widen the scope of Ferrers boards under consideration. 

Suppose now that $\lambda$ satisfies $\lambda_{i} \geq i$ for $i=1, \ldots ,n$ and we do not require that $\lambda_{n} = n$ holds. A \defin{row-complete} rook placement on $\lambda$ is 
a placement of rooks such that every row contains a rook, and every column contains at most one rook.
Let \defin{$S_{\lambda}$} denote the set of row-complete rook placements on $\lambda$.

Similar to the previous section, a row-complete rook placement $\sigma$ is described as a \emph{word with distinct entries}  $\sigma_1\dotsc\sigma_n$, where $\sigma_{i}$ denotes the column index of the rook placed in the $i^\thsup$ row. The notion of ascent generalizes to this setting. By a minor abuse of notation, we let $Q^{\lambda}(t)$ denote the ascent-generating polynomial 
for row-complete rook placements on $\lambda$. We will refer to this as the \defin{rook-Eulerian polynomial on $\lambda$}. As we will only talk about row-complete rook placements in this section, we will sometimes refer to these objects only as ``rook placements.''

In order to prove real-rootedness of $Q^{\lambda}(t)$, we need to introduce a refinement.
Consider the set of rook placements on $\lambda$ 
such that column index of the rook placed in the first row is $i$. 
Its associated ascent-generating polynomial is
\begin{equation}\label{eq:ascQi}
    \defin{Q^{\lambda}_i(t)} \coloneqq \sum_{\substack{\sigma \in S_{\lambda} \\ \sigma_{1} = i}} t^{\asc(\sigma)}.
\end{equation}

By conditioning on the value of the first entry in the word, it is clear 
that these polynomials refine $Q^{\lambda}$ and that we have
\begin{equation}
Q^{\lambda}(t) = \sum_{i=1}^{\lambda_1} Q^{\lambda}_{i}(t).
\end{equation}

\begin{example}
Consider the shape $\lambda = (3,4,4,6,7)$. See the figure below for a row-complete rook placement on $\lambda$, encoded by the word $32417$, with ascents at positions $2$ and $4$.

\[
\begin{ytableau}
\ & \ & \rook \\
\ & \rook & \ &  \\
\ & \ & \ & \rook  \\
\rook & \ & \ & \ & \ & \ \\
\ & \ & \  & \ & \ & \ & \rook \\
\end{ytableau}
\]

The rook-Eulerian polynomial on $\lambda$ is $Q^\lambda(t) =  15 t^4+81 t^3+63 t^2+3 t$, while the polynomials refining $Q_{\lambda}$ are

\begin{align*}
Q^\lambda_1(t) &= 12t^2 + 30t^3 + 12t^4, \\
Q^\lambda_2(t) &= 21t^2 + 30t^3 + 3t^4, \\
Q^\lambda_3(t) &= 3t + 30t^2 + 21t^3.
\end{align*}
\end{example}

This refinement will be of use in obtaining the recursion below and applying the theory interlacing sequences of polynomials to this recursion.

\begin{definition}
Let $f$ and $g$ be polynomials with positive leading coefficients and real, non-positive
zeros, $a_{i}$ and $b_{i}$, respectively.
We say that $f$ \defin{interlaces} $g$, and we write $\defin{f \preceq g}$ if
\[
\dotsm \leq a_{3}\leq b_{3}\leq a_{2}\leq b_{2}\leq a_{1}\leq b_{1} \leq 0.
\]
Note that $\deg(f)=\deg(g)$ or $\deg(f)+1=\deg(g)$.
\end{definition}

We also need the concept of interlacing sequences, which has proven useful for establishing real-rootedness results. 

\begin{definition}
    A sequence $F = (f_{1}, \ldots ,f_{n})$ of real-rooted polynomials is \defin{interlacing} if $f_{i} \preceq f_{j}$ for $1\leq i < j \leq n$. 
\end{definition}

We first establish a simple recurrence for the polynomials $Q_{i}^{\lambda}$.

\begin{lemma}\label{lem:Q_lambda_recursion}
Let $\lambda = (\lambda_{1}, \dotsc , \lambda_{n})$ be a Ferrers board,
and let $\lambda^+$ be the the Ferrers board defined by $\lambda^+ \coloneqq (m,\lambda_{1}+1,  \dotsc , \lambda_{n}+1)$ for some $m \leq \lambda_{1}+1$.
Then for any $i \leq m$, the following holds:
\begin{equation}\label{eq:ascent_gen_recursion}
Q^{\lambda^+}_{i}(t) = 
\sum_{j=1}^{i-1} Q^{\lambda}_{j}(t)  + 
t \sum_{j=i}^{\lambda_1} Q^{\lambda}_j(t).
\end{equation}
\end{lemma}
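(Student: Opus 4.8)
The plan is to prove the recursion bijectively, relating row-complete rook placements on $\lambda^+$ whose first row places its rook in column $i$ to arbitrary row-complete rook placements on $\lambda$. Writing a placement $\tau \in S_{\lambda^+}$ with $\tau_1 = i$ as a word $i\,\tau_2\cdots\tau_{n+1}$, the idea is to delete the first letter and \emph{standardize} the remaining word: for each $k \in \{2,\dots,n+1\}$ set $\sigma_{k-1} = \tau_k - 1$ if $\tau_k > i$ and $\sigma_{k-1} = \tau_k$ if $\tau_k < i$. This produces a word $\sigma_1\cdots\sigma_n$ with distinct entries, and I claim it is a row-complete rook placement on $\lambda$.

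First I would check that this map is well defined and bijective. Row $k$ of $\lambda^+$ has length $\lambda_{k-1}+1$ for $k \geq 2$, so $\tau_k \leq \lambda_{k-1}+1$; the two cases of the definition then give $\sigma_{k-1} \leq \lambda_{k-1}$, where the case $\tau_k < i$ uses $i \leq m \leq \lambda_1 + 1$. The inverse map inserts the value $i$: given $\sigma \in S_{\lambda}$, set $\tau_1 = i$ and $\tau_{k+1} = \sigma_k + 1$ if $\sigma_k \geq i$, and $\tau_{k+1} = \sigma_k$ otherwise; one checks symmetrically that $\tau_{k+1} \leq \lambda_k + 1$, so $\tau \in S_{\lambda^+}$ with $\tau_1 = i$. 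The length $m$ of the first row enters only through the hypothesis $i \leq m$, which guarantees that $\tau_1 = i$ is a legal placement.

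The crux is tracking ascents. Since deleting the value $i$ and shifting is order-preserving, for every $k \geq 2$ we have $\tau_k < \tau_{k+1}$ if and only if $\sigma_{k-1} < \sigma_k$; hence the ascents of $\tau$ among positions $2,\dots,n$ are in bijection with all ascents of $\sigma$. The only remaining comparison is at position $1$, where $\tau_1 = i < \tau_2$ holds precisely when $\sigma_1 \geq i$, by the definition of the inverse map. Therefore $\asc(\tau) = \asc(\sigma) + 1$ when $\sigma_1 \geq i$, and $\asc(\tau) = \asc(\sigma)$ otherwise.

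Summing $t^{\asc(\tau)}$ over all such $\tau$ and reindexing by $\sigma \in S_{\lambda}$ then splits the sum according to whether $\sigma_1 < i$ or $\sigma_1 \geq i$, yielding
\[
Q^{\lambda^+}_{i}(t) = \sum_{\substack{\sigma \in S_{\lambda} \\ \sigma_1 < i}} t^{\asc(\sigma)} + t\sum_{\substack{\sigma \in S_{\lambda} \\ \sigma_1 \geq i}} t^{\asc(\sigma)} = \sum_{j=1}^{i-1} Q^{\lambda}_{j}(t) + t\sum_{j=i}^{\lambda_1} Q^{\lambda}_{j}(t),
\]
since $\sigma_1$ ranges over $1,\dots,\lambda_1$. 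The main obstacle I anticipate is purely bookkeeping: verifying the column bounds for the standardization in each of the two cases, and confirming that the boundary case $i = \lambda_1 + 1$ (where the second sum is empty and the first collapses to $Q^{\lambda}$) is handled correctly, rather than any conceptual difficulty.
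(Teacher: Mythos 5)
Your proof is correct and is essentially the paper's argument: your ``delete the first letter and standardize'' map is precisely the paper's bijection of deleting the first row of $\lambda^+$ and then the $i^{\thsup}$ column, and the ascent bookkeeping at position $1$ gives the same split of the sum. If anything, your version is written more carefully --- your condition $\sigma_1 \geq i$ (in the standardized labels) for gaining the factor $t$ is the correct one, whereas the paper's proof states the threshold as ``$j \geq i-1$,'' which appears to be off by one.
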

\begin{proof}
Let $\sigma = i \, \sigma_{1}\ldots\sigma_{n}$ be a rook placement on $\lambda^+$.
The operation of deleting the first row of $\lambda^+$ followed by the $i^{\thsup}$ 
column of $\lambda$ bijectively maps rook placements on $\lambda^+$ with the first rook 
in column $i$, to rook placements on $\lambda$. 
If $\sigma_{1} = j$, then the number of ascents in the rook 
placement on $\lambda$ decreases by $1$ if and only if $j \geq i-1$,
and \eqref{eq:ascent_gen_recursion} follows immediately.
\end{proof}

\begin{example}
For example, if $\lambda^+ = (3,4,5,5,7,8)$ with $m=3$,
then $\lambda = (3,4,4,6,7)$. A rook placement on 
$\lambda^+$ arising from the term $Q_2^{\lambda}(t)$
is illustrated below:
\[
\begin{ytableau}
*(pOrange) & *(pOrange) \rook & *(pOrange) \\
\ & *(pOrange) & \ & \rook \\
\ & *(pOrange) & \rook & \ & \\
\ & *(pOrange) & \ & \ & \rook \\
\rook & *(pOrange) & \ & \ & \ & \ & \\
\ & *(pOrange) & \ & \ & \ & \ & \ & \rook \\
\end{ytableau}
\]
\end{example}

We now prove our main result: the sequence of polynomials 
\begin{equation}\label{eq:QSeq}
(
Q^{\lambda}_{\lambda_1}(t), \;
Q^{\lambda}_{\lambda_1-1}(t), \; \dotsc \;,
Q^{\lambda}_{2}(t),\;
Q^{\lambda}_{1}(t)
).
\end{equation}
is an interlacing sequence.

\begin{theorem}\label{prop:Q_lambda_real_rooted}
For all Ferrers boards of shape $\lambda = (\lambda_{1}, \ldots , \lambda_{n})$ and $n > 1$, 
the family of polynomials is $(Q^{\lambda}_i)_{i=1}^{\lambda_{1}}$ is interlacing. 
In particular, the rook-Eulerian polynomial $Q^{\lambda}$ is real-rooted.
\end{theorem}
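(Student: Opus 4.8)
\textit{Proof plan.}
I would prove the stronger assertion that the sequence~\eqref{eq:QSeq} is interlacing by induction on the number of rows $n$, and then read off real-rootedness of $Q^{\lambda}$ from the general principle that the sum of an interlacing sequence of polynomials (with nonnegative coefficients and positive leading coefficients) is real-rooted. For the base case $n=1$ we have $\lambda=(\lambda_1)$, every row-complete placement is a single rook admitting no ascent, so $Q^{\lambda}_i(t)=1$ for each $i$; the constant sequence $(1,\dots,1)$ is vacuously interlacing, which anchors the induction (the hypothesis $n>1$ in the statement merely excludes the degenerate case where $Q^{\lambda}$ is constant).

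For the inductive step I would first record a structural observation: every Ferrers board with $n+1$ rows is of the form $\lambda^+=(m,\lambda_1+1,\dots,\lambda_n+1)$ for a unique $n$-row board $\lambda$ with $\lambda_i\ge i$ and a unique $m\le\lambda_1+1$. Indeed one sets $\lambda_k=\lambda^+_{k+1}-1$ and $m=\lambda^+_1$, and the inequalities $\lambda_1\le\dots\le\lambda_n$, $\lambda_k\ge k$, and $m\le\lambda_1+1$ all follow from $\lambda^+$ being weakly increasing with $\lambda^+_i\ge i$. Consequently Lemma~\ref{lem:Q_lambda_recursion} produces the entire family $(Q^{\lambda^+}_i)_{i=1}^{m}$ from $(Q^{\lambda}_j)_{j=1}^{\lambda_1}$, so it suffices to show that the recursion~\eqref{eq:ascent_gen_recursion} sends interlacing sequences to interlacing sequences.

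The crux is therefore a transform lemma that I would isolate and prove on its own: if $(g_{\mu},g_{\mu-1},\dots,g_1)$ is an interlacing sequence of polynomials with nonnegative coefficients, and one sets $G_i\coloneqq\sum_{j=1}^{i-1}g_j+t\sum_{j=i}^{\mu}g_j$ for $1\le i\le\mu+1$, then $(G_{\mu+1},G_{\mu},\dots,G_1)$ is again interlacing. Applying this with $g_j=Q^{\lambda}_j$, $\mu=\lambda_1$, and $G_i=Q^{\lambda^+}_i$, the sequence we actually want, $(Q^{\lambda^+}_m,\dots,Q^{\lambda^+}_1)$, is the length-$m$ tail of $(G_{\mu+1},\dots,G_1)$ because $m\le\lambda_1+1$, and any subsequence of an interlacing sequence is interlacing.

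To prove the transform lemma I would deploy the standard interlacing toolkit: that a common interlacer may be summed (if $f\preceq g_1$ and $f\preceq g_2$ then $f\preceq g_1+g_2$, and dually if $g_1\preceq f$ and $g_2\preceq f$ then $g_1+g_2\preceq f$), together with the fact that multiplication by $t$ adjoins a root at $0$ compatibly with $\preceq$. Writing each $G_i$ as $t\cdot(\text{prefix sum})+(\text{suffix sum})$ and using that every $g_c$ is commonly interlaced by the earlier $g_j$ and commonly interlaces the later $g_j$, one obtains the consecutive relations $G_{i+1}\preceq G_i$ (note $G_i-G_{i+1}=(t-1)g_i$). The main obstacle is that ``interlacing sequence'' demands $G_a\preceq G_b$ for \emph{all} $a>b$, not only consecutive indices; closing this gap is the technical heart. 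I expect to handle it by strengthening the inductive hypothesis, carrying along the statements that successive prefix sums interlace and that the $g_c$ act as common interlacers across the whole range, so that the pairwise relations assemble coherently rather than being argued one pair at a time. Once the transform lemma is established, summing the interlacing family gives that $Q^{\lambda^+}=\sum_i Q^{\lambda^+}_i$ is real-rooted, which completes the induction and hence the theorem.
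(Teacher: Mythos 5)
Your overall strategy is the same as the paper's: induct via Lemma~\ref{lem:Q_lambda_recursion}, reduce everything to the claim that the linear transform in \eqref{eq:ascent_gen_recursion} carries interlacing sequences to interlacing sequences, and then extract real-rootedness of $Q^{\lambda}$ at the end. Your packaging differs only in inessential ways: you induct on the number of rows (with the correct observation that every $(n+1)$-row board is uniquely of the form $(m,\lambda_1+1,\dotsc,\lambda_n+1)$) where the paper inducts on the size of the bounding square, and you conclude real-rootedness from the standard fact that a sum of an interlacing sequence is real-rooted, where the paper uses the identity $t\,Q^{\lambda}(t)=Q^{\lambda^{+}}_{1}(t)$ with $\lambda^{+}=(1,\lambda_1,\dotsc,\lambda_n)$. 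Both of those substitutions are sound.

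The genuine gap is exactly at what you yourself call the ``technical heart'': the transform lemma is never proved, and it is the entire mathematical content of the theorem. The paper closes this precise point by observing that the transform is given by a matrix with entries in $\{1,t\}$ satisfying the hypotheses of \cite[Corollary~8.7]{Branden2015}, which is a theorem about interlacing-preserving matrices, not a routine manipulation. Your sketch does not substitute for it. First, the parenthetical ``$G_i-G_{i+1}=(t-1)g_i$'' proves nothing: knowing the difference of two polynomials does not establish $G_{i+1}\preceq G_i$. Second, the natural route through the summation toolkit fails even for consecutive indices: writing $G_{i+1}=\sum_{j\le i}g_j+t\sum_{j>i}g_j$ and $G_i=\sum_{j<i}g_j+t\sum_{j\ge i}g_j$, the standard lemma (if every summand of $F$ interlaces every summand of $G$ then $F\preceq G$) would require $g_j\preceq g_k$ for $j<k<i$, whereas the hypothesis gives only the reverse relation $g_k\preceq g_j$; so the shared low-index summands obstruct the piece-by-piece argument, and a finer analysis (common interlacers across prefix/suffix blocks, or the $2\times 2$-submatrix criterion underlying \cite[Corollary~8.7]{Branden2015}, or an argument in the style of \cite[Lemma~2.3]{SavageVisontai2015}) is genuinely needed. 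Your proposal defers this to a ``strengthened inductive hypothesis'' that is never formulated, so as written the crux is assumed rather than proved. The fix is easy --- cite the Brändén result, as the paper does, or carry out that strengthened induction explicitly --- but without one of these the argument is incomplete.
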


\begin{proof}
We prove the first statement by induction on $n$, the size of the smallest $n \times n$ square containing the shape $\lambda.$ The base case is immediate since the polynomial associated to the $1 \times 1$ box is the constant polynomial $1$.
    
Now suppose $n \geq 2$ and that for all Ferrers boards $\lambda$ 
fitting inside an $n \times n$ square, the sequence of polynomials in
\eqref{eq:QSeq} is interlacing.  

Let $\lambda^+$ be a given Ferrers board fitting inside an $(n+1){\times}(n+1)$ square. Suppose $m$ is the first entry of $\lambda^{+}$ and $\lambda$ is the Ferrers shape obtained after deleting $m$ from $\lambda^{+}$ and then subtracting $1$ from each entry of the partition. The 
recursions in Lemma~\ref{lem:Q_lambda_recursion} can then be expressed using an $m{\times} \lambda_1$ matrix:
 \begin{gather}
    \begin{bmatrix}
    Q^{\lambda^+}_{m}\\
    \vdots\\
    \vdots\\
    Q^{\lambda^+}_{3} \\
    Q^{\lambda^+}_{2} \\
    Q^{\lambda^+}_{1}
    \end{bmatrix}
    =
    \begin{bmatrix}
        t & t & t & \dots & 1 & 1 & \dots & 1 & 1 \\
        t & t & t & \dots & t & 1 & \dots & 1 & 1 \\
        \vdots & \vdots & & & & \ddots & \ddots & \ddots & \vdots \\
        t & t & \dots & t & t & \dots & t & 1 & 1 \\
        t & t & \dots & t & t & \dots & t & t & 1 \\
        t & t & \dots & t & t & \dots & t & t &t
    \end{bmatrix}
   \begin{bmatrix}
    Q^{\lambda}_{\lambda_1}\\
    \vdots \\
    \vdots\\
    Q^{\lambda}_{3} \\
    Q^{\lambda}_{2} \\
    Q^{\lambda}_{1}
    \end{bmatrix}.
\end{gather}

Let $G_{\lambda}$ be the matrix above; its entries are given by\[
g_{i,j}^{\lambda}(t) = \begin{cases}
    t \quad &\text{if $1\leq j \leq \lambda_{1} - (i-1)$}\\
    1 \quad &\text{otherwise.}
\end{cases}
\]

This matrix satisfies the conditions of \cite[Corollary 8.7] {Branden2015}, and hence preserves the interlacing sequence property. Then, since the vector of polynomials on the right hand side of the equation above is interlacing (by the induction hypothesis), so is the left hand side. This completes the inductive step and hence we are done. The real-rootedness of $Q^{\lambda}$ follows immediately by noting that $t \cdot Q^{\lambda}(t) = Q^{\lambda^{+}}_{1}$, where $\lambda^{+} = (1, \lambda_{1}, \ldots , \lambda_{n})$.
\end{proof}

\subsection{A multivariate generalization}

One multivariate version of the rook-Eulerian polynomials can be defined as follows
\[
Q^\lambda(\xvec) = \sum_{\sigma \in S_{\lambda}} \prod_{i \in \Asc(\sigma)} x_i,
\]
where $\Asc(\sigma) = \{i \in [n-1]: \sigma_{i} < \sigma_{i+1}\}$ and $\xvec = (x_{1}, \ldots , x_{n-1})$. 

Recall that a multivariate polynomial $P(x_{1}, \dotsc , x_{n})$ is \defin{same-phase stable} 
if for all $\mathbf{\mu} \in \mathbb{R}_{+}^{n}$, the univariate polynomial $P(t\mathbf{\mu})$ is real-rooted. 
By an identical argument as in the proof of Proposition~\ref{prop:Q_lambda_real_rooted}, we have the following.
\begin{theorem}
For all Ferrers boards $\lambda$, $Q^\lambda(\xvec)$ is same-phase stable.
\end{theorem}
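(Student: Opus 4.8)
The plan is to mirror the univariate proof of Theorem~\ref{prop:Q_lambda_real_rooted}, replacing the scalar variable $t$ throughout by the appropriate indeterminate $x_i$ and invoking the multivariate version of the interlacing-preservation machinery. First I would introduce the refined multivariate polynomials $Q^\lambda_i(\xvec)$, defined by summing $\prod_{k \in \Asc(\sigma)} x_k$ over those row-complete rook placements $\sigma \in S_\lambda$ whose first entry is $\sigma_1 = i$, exactly paralleling~\eqref{eq:ascQi}. These again refine $Q^\lambda(\xvec)$ in the sense that $Q^\lambda(\xvec) = \sum_{i=1}^{\lambda_1} Q^\lambda_i(\xvec)$, obtained by conditioning on the first entry of the word.

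Next I would re-derive the recursion of Lemma~\ref{lem:Q_lambda_recursion} in the multivariate setting. The same deletion bijection (remove the first row, then the $i^{\thsup}$ column) sends rook placements on $\lambda^+$ with first rook in column $i$ to rook placements on $\lambda$; the crucial point is to track how the ascent set shifts. Deleting the first row destroys the ascent at position $1$ and re-indexes every subsequent ascent position by $-1$, so a monomial for the smaller board carries variables $x_2, x_3, \dotsc$ rather than $x_1, x_2, \dotsc$. This means the matrix entries are no longer $t$ versus $1$ but rather $x_1$ versus $1$, since the sole new variable introduced when prepending a row is $x_1$ (the ascent or non-ascent between the new first entry $i$ and the old first entry $j$). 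Thus the governing matrix $G_\lambda(\xvec)$ has entries
\[
g^\lambda_{i,j}(\xvec) = \begin{cases}
x_1 & \text{if } 1 \leq j \leq \lambda_1 - (i-1),\\
1 & \text{otherwise,}
\end{cases}
\]
and the recursion reads $[Q^{\lambda^+}_i(\xvec)]_i = G_\lambda(\xvec)\,[Q^\lambda_j(\xvec)]_j$ with the same triangular sign pattern as in the univariate display.

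With the recursion in hand, the proof proceeds by induction on $n$, just as before. The base case is the $1 \times 1$ board with constant polynomial $1$. For the inductive step I would verify that $G_\lambda(\xvec)$ preserves the interlacing-sequence property for same-phase-stable polynomials. The clean way to do this is to specialize: fix $\mathbf{\mu} \in \mathbb{R}_+^{n-1}$ and substitute $\xvec = t\mathbf{\mu}$, so that each $Q^\lambda_i(t\mathbf{\mu})$ becomes a univariate polynomial in $t$, the variable $x_1$ becomes $t\mu_1$, and $G_\lambda(t\mathbf{\mu})$ becomes a matrix whose nonconstant entries are all the single positive multiple $t\mu_1$ of $t$. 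This is precisely a matrix of the form covered by the interlacing-preserver criterion of \cite[Corollary 8.7]{Branden2015} (a positive rescaling of the variable does not affect the hypotheses), so the specialized left-hand vector is interlacing whenever the specialized right-hand vector is. Since $\mathbf{\mu}$ was arbitrary, same-phase stability is preserved along the recursion, and real-rootedness of $Q^\lambda(t\mathbf{\mu}) = \sum_i Q^\lambda_i(t\mathbf{\mu})$ follows because a sum of polynomials from a common interlacing sequence is real-rooted. The main obstacle I anticipate is purely bookkeeping: confirming that no variable other than $x_1$ is created by prepending a row, and that the index shift $x_k \mapsto x_{k+1}$ in the deletion bijection does not interfere with the monomials already recorded in $Q^\lambda_j(\xvec)$. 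Once the recursion's variable structure is pinned down correctly, the analytic content is entirely inherited from the univariate case via the same-phase specialization.
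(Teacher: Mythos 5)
Your proposal is correct and takes essentially the same route as the paper: the paper's proof of this theorem is literally the remark that the argument of Theorem~\ref{prop:Q_lambda_real_rooted} goes through verbatim, and your writeup supplies exactly that argument --- the refined polynomials $Q^{\lambda}_i(\xvec)$, the recursion in which prepending a row introduces only the variable $x_1$ (with the index shift $x_k \mapsto x_{k+1}$ on the smaller board), and the specialization $\xvec = t\mu$ along positive rays so that \cite[Corollary 8.7]{Branden2015} applies to the matrix with entries $t\mu_1$ versus $1$. The only minor slips are bookkeeping ones that do not affect the argument: for $\lambda^+$ inside an $(n{+}1)\times(n{+}1)$ square the direction vector should lie in $\mathbb{R}_{+}^{n}$, and the induction hypothesis is invoked for the truncated ray $t(\mu_2,\dotsc,\mu_n)$, which is legitimate precisely because that hypothesis is quantified over all positive direction vectors.
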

In the case of $\lambda=(n)^n$, same-phase stability was proved earlier in \cite{AlexanderssonNabawanda2021}, where it was also observed that stability of 
$Q^\lambda(\xvec)$ fails for $\lambda=(5,5,5,5,5)$.

\section{Variations and generalizations}

\subsection{Relation to \texorpdfstring{$s$}{s}-Eulerian polynomials}

Savage and Visontai introduced the $\svec$-Eulerian polynomials $E_{n}^{(\svec)}(x)$ in \cite{SavageVisontai2015} as the ascent-generating polynomial of $\svec$-inversion sequences. Among the various properties uncovered, they showed that this family of polynomials is real-rooted using a similar technique as in Section~\ref{section:rr_rook_eulerian}.   

Given that the $\svec$-Eulerian and rook-Eulerian polynomials are both real-rooted and 
satisfy similar-looking recurrences \cite[Lemma 2.3]{SavageVisontai2015}, 
it is natural to ask if there is equality between these families of polynomials. 
We show that rook-Eulerian polynomials are not included in the $\svec$-Eulerian polynomials. 
Thus, the polynomials $Q^{\lambda}$ are a new generalization of the Eulerian polynomials. 

\begin{definition}
For a sequence $\svec = (s_{1}, \ldots ,s_{n})$ of positive integers, 
the \defin{$\svec$-inversion sequences} are defined by:
 \[   
\mathcal{I}_n^{\svec} \coloneqq 
\{(e_1,\dots,e_n) \in \setZ^{n}: 0 \leq e_i < s_i \text{ for } 1 \leq i \leq n \}.
\]
    
The \defin{ascent set} of an $\svec$-inversion sequence $\evec = (e_1, \dots, e_n) \in \mathcal{I}_n^{(\svec)}$ is the set
\[
    \Asc(\evec) \coloneqq \left \{ i \in \{0, 1, \dots, n-1 \}: \frac{e_i}{s_i} < \frac{e_{i+1}}{s_{i+1}} \right\}
\]
with the convention that $e_0 = 0$ and $s_0 = 1$.

The \defin{ascent statistic} on $\evec \in \mathcal{I}_n^{(\svec)}$ is defined as:
\[
\asc(\evec) =|\Asc(\evec)|.
\]
\end{definition}

\begin{definition}
Given a sequence $\svec = (s_{1}, \ldots , s_{n})$ 
of positive integers, the $\svec$-Eulerian polynomial is defined as: 
\[
E_n^{(\svec)} (t) = \sum_{\evec \in \mathcal{I}_n^{(\svec)}} t^{\asc(\evec)}.
\]
\end{definition}

\begin{example}
    When $\svec = (1,2,\ldots , n)$, by a well-known bijection from inversion sequences to permutations, we get the $n^{\text{th}}$ Eulerian polynomial: 
    \[
    E_{n}^{(\svec)}(t) = A_{n}(t).
    \]
\end{example}

The following result shows that the family of rook-Eulerian polynomials is distinct from 
the family of the $\svec$-Eulerian polynomials.

\begin{theorem}\label{theorem:rook_Eulerian_not_s_Eulerian}
    Let $\lambda = (2,3,5,5,5)$,
    there is no sequence $\svec$ and integer $m$ such that~$Q^\lambda = E_m^{(\svec)}$. 
\end{theorem}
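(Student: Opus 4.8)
The plan is to separate the two families by a single polynomial invariant. First I would compute $Q^{\lambda}$ for $\lambda=(2,3,5,5,5)$ outright. Since $\lambda_5=5=n$, a row-complete placement fills all five columns, so it is exactly a permutation $\sigma\in\symS_5$ constrained only by $\sigma_1\le 2$ and $\sigma_2\le 3$; there are $2\cdot 2\cdot 3\cdot 2\cdot 1=24$ of them. Sorting these by $\asc(\sigma)$ --- either by direct enumeration or by unwinding the recursion of Lemma~\ref{lem:Q_lambda_recursion} --- gives
\[
Q^{\lambda}(t) = t^4 + 13t^3 + 9t^2 + t,
\]
and as a consistency check $Q^{\lambda}(1)=24=\prod_i(\lambda_i-(i-1))$.

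The crux is the following structural fact about the competing family: \emph{every} $\svec$-Eulerian polynomial has constant term $1$. I would prove this by computing $E_m^{(\svec)}(0)$, which counts the $\svec$-inversion sequences with no ascent. Writing out the ascent-free condition with the convention $e_0=0$, $s_0=1$ yields the chain
\[
0 = \frac{e_0}{s_0} \ge \frac{e_1}{s_1} \ge \cdots \ge \frac{e_m}{s_m} \ge 0,
\]
which forces every ratio, and hence every $e_i$, to vanish. Thus $\evec=(0,\dots,0)$ is the unique ascent-free inversion sequence, so $E_m^{(\svec)}(0)=1$ for all $\svec$ and all $m$.

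Comparing the two then completes the proof. The minimal number of ascents over placements on $\lambda$ is positive, because the only decreasing permutation of $\{1,\dots,5\}$ is $54321$, whose first entry $5$ violates the constraint $\sigma_1\le 2$; hence $Q^{\lambda}$ has no constant term, i.e.\ $Q^{\lambda}(0)=0$. Since $0\ne 1 = E_m^{(\svec)}(0)$, no choice of $\svec$ and $m$ can satisfy $Q^{\lambda}=E_m^{(\svec)}$.

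I do not expect a serious obstacle here: the conceptual point is simply that an ascent-free $\svec$-inversion sequence always exists (the all-zero one), whereas a decreasing rook placement need not fit inside a thin board, so the two families are distinguished by their minimal ascent number. The only place to be careful is the bookkeeping in the ascent enumeration that produces $Q^{\lambda}$; one could instead confirm the coefficients against Lemma~\ref{lem:Q_lambda_recursion}. I would also remark that the same argument in fact rules out every proper Ferrers board (any $\lambda\ne(n,\dots,n)$), so $(2,3,5,5,5)$ serves merely as one convenient witness, with the genuine coincidence of the two families occurring only at the full square, where $Q^{(n,\dots,n)}=A_n=E_n^{(1,2,\dots,n)}$.
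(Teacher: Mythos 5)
Your proof is correct, and it takes a genuinely different route from the paper's. The paper's proof evaluates both sides at $t=1$ to get $\prod_i s_i = 24$, argues that consecutive $1$s in $\svec$ may be deleted without changing $E_m^{(\svec)}$ so that only finitely many sequences $\svec$ need testing, and then finishes by exhaustive computer search. You instead separate the two families by a single invariant, the constant term: the chain $0 = e_0/s_0 \ge e_1/s_1 \ge \dots \ge e_m/s_m \ge 0$ forces the all-zero sequence to be the unique ascent-free $\svec$-inversion sequence, so $E_m^{(\svec)}(0)=1$ for every $m$ and $\svec$, while no ascent-free placement fits on $\lambda=(2,3,5,5,5)$ because a strictly decreasing word of length five needs $\sigma_1 \ge 5 > 2 = \lambda_1$, so $Q^{\lambda}(0)=0$. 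This is fully rigorous, needs no computer search at all (the explicit expansion of $Q^{\lambda}$ is only a consistency check), and it isolates the conceptual reason the families differ.

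One discrepancy deserves attention. The paper's proof displays $Q^{\lambda}(t) = t^{3}+9t^{2}+13t+1$, whereas you obtain $t^{4}+13t^{3}+9t^{2}+t$. Direct enumeration of the $24$ placements confirms \emph{your} value under the paper's own definition of $Q^{\lambda}$ (ascents $\sigma_i<\sigma_{i+1}$ for $i\in[n-1]$): the unique placement with four ascents is $12345$ and the unique one with a single ascent is $21543$. The paper's displayed polynomial is the reciprocal of yours, i.e.\ the descent-generating polynomial of the same set of placements. Your argument is sensitive to this convention: for the descent version the constant term is $1$ and your invariant says nothing, which is presumably why the paper resorts to a search. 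So your proof stands exactly because the paper's official definition uses ascents. Finally, your closing remark overreaches slightly: in the paper's row-complete setting (Section 2) boards need not fit inside the $n\times n$ square, and for instance $\lambda=(2,3)$ gives $Q^{\lambda} = 1+3t = E_1^{(\svec)}$ with $\svec=(4)$, an honest coincidence outside squares. Your argument rules out precisely the boards with $\lambda_1 < n$ (in particular every non-square board arising from a $312$-avoiding permutation), not every $\lambda \ne (n,\dots,n)$.
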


\begin{proof}

If to the contrary there exists $m$ and $\svec$ such that $Q^\lambda(t) = E_m^{(\svec)}(t)$, then 
evaluating both polynomials at $t=1$ yields $\prod_{i=1}^m s_i = \prod_{i=1}^n (\lambda_i - n + i)$. 
First note that deleting consecutive $1$s in $\svec$ yields the same $\svec$-Eulerian polynomial. 
This is because if $s_i = s_{i+1} = 1$ for some $0<i<m$, then each  $\evec \in \mathcal{I}_n^{(\svec)}$ 
satisfies $ e_i = e_{i+1} = 0$. In particular, $i \notin \text{Asc}(\evec)$. 
If we delete the consecutive $1$s from $\svec$ and denote the resulting sequence by $\svec'$, we have $E_m^{(\svec)} = E_{m-1}^{(\svec')}.$

This naturally places an upper bound on the possible values of $m$ and hence allows us to restrict 
ourselves to testing finitely many $\svec$ sequences as possible candidates for the required equality. 
By exhaustive computer search, there exists no $\svec$ satisfying $\prod_{i=1}^m s_i = \prod_{i=1}^n (\lambda_i - n + i)$ for $\lambda = (2,3,5,5,5)$. The corresponding $Q^{\lambda}$ is equal to
\[
Q^{\lambda}(t) = t^{3} + 9t^{2} + 13t + 1.
\]
\end{proof} 

\subsection{Descents in general Bruhat intervals}
Given the correspondence between complete rook placements and lower intervals of $312$-avoiding permutations in the Bruhat order, it is natural to ask if the real-rootedness holds for all lower intervals in the Bruhat order. That is, given $\pi \in \symS_n$, is
\[
Q^{\pi}(t) = \sum_{\sigma \in [\id, \pi]_{B}} t^{\des(\pi)}
\]
real-rooted? As the example below shows, the answer is no, thus highlighting the special nature of the $312$-avoiding condition. 

\begin{example}
    For $\pi = [4,6,2,1,7,3,5]$, using $\mathtt{Mathematica}$ one can compute the polynomial $Q^{\pi}$ to be
\begin{equation}
Q^{\pi}(t)  = 1 + 43 t + 196 t^2 + 168 t^3 + 23 t^4 + t^5,
\end{equation}

\noindent which has a pair of non-real roots near $-10.8143561 \pm 4.5913096i$.
For all permutations in $\symS_6$, we do get real-rooted polynomials by an exhaustive computer search. 
Note that, as expected, $[\mathbf{4}, 6, \mathbf{2}, 1, 7, \mathbf{3}, 5]$ contains the pattern $312$. 
\end{example} 

We can ask the same question for descent-generating polynomials of lower intervals $[\id, \sigma]_{W}$; this polynomial also fails to be real-rooted in general, but this time the counterexample has an interesting structure.

Recall that given a poset $P$ on $n$ elements, the Jordan--Hölder set $\mathcal{L}(P)$ of $P$, is the set of inverses of linear extensions of $P$, and $W_{P}$ is the descent-generating polynomial of $\mathcal{L}(P)$: \begin{align*}
\mathcal{L}(P) &= \{\sigma \in \symS_{n}: i \preceq_{P} j \implies \sigma_{i}^{-1} < \sigma_{j}^{-1}\},  \\
W_{P} &= \sum_{\sigma \in \mathcal{L(P)}}t^{\des(\sigma)}.
\end{align*}

Permutations naturally give rise to the following class of posets.

\begin{definition}\label{definition:permutation_poset}
    Let $\sigma \in \symS_{n}$. The partial order in the \defin{permutation poset} $P_{\sigma} = ([n], \preceq)$ is defined as \[
    i \preceq j \quad \iff \quad i \leq j \; \text{and} \; \sigma_{i}\leq \sigma_{j}.    \]
\end{definition}

The following lemma can be seen by interpreting the cover relation in the weak order in terms of containment of inversion sets.

\begin{lemma}\cite[Lemma 5]{FelsnerWernisch1997MarkovChains}\label{lem:permutation_poset_linear_extensions}
    The Jordan--Hölder set of $P_{\sigma}$ equals the lower interval $[\id, \sigma]_{W}$ in the weak order on $\symS_{n}$.
\end{lemma}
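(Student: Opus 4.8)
The plan is to prove Lemma~\ref{lem:permutation_poset_linear_extensions} by showing the two sets coincide, using the standard characterization of the weak order in terms of inversion sets. Recall that for $\pi \in \symS_n$ the (left) inversion set is $\inv(\pi) = \{(i,j) : i < j,\ \pi_i > \pi_j\}$, and that $\sigma \leq_W \pi$ holds if and only if $\inv(\sigma) \subseteq \inv(\pi)$; this is the inversion-set reformulation of the cover relation alluded to in the sentence preceding the lemma. I would first reduce the statement to inverse permutations: membership of $\tau$ in $\mathcal{L}(P_\sigma)$ is a condition on $\tau^{-1}$, namely that $\tau^{-1}$ be a linear extension of $P_\sigma$, so it is cleanest to set $\pi = \tau^{-1}$ and prove the equivalent claim that $\pi$ is a linear extension of $P_\sigma$ if and only if $\sigma^{-1} \leq_W \pi^{-1}$, then transport back through the order-reversing/preserving behaviour of inversion under taking inverses.

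The core of the argument is a direct computation matching the two defining conditions. First I would unpack what it means for $\pi$ to be a linear extension of $P_\sigma$: by Definition~\ref{definition:permutation_poset}, $i \preceq_{P_\sigma} j$ exactly when $i \le j$ and $\sigma_i \le \sigma_j$, so $\pi$ is a linear extension precisely when, for every pair with $i<j$ and $\sigma_i < \sigma_j$, we have $\pi_i < \pi_j$. Contrapositively, this says: whenever $\pi_i > \pi_j$ with $i<j$ (i.e.\ $(i,j)$ is an inversion of $\pi$), we must \emph{not} have $\sigma_i < \sigma_j$, hence $\sigma_i > \sigma_j$, so $(i,j)$ is also an inversion of $\sigma$. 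In other words, $\pi$ is a linear extension of $P_\sigma$ if and only if $\inv(\pi) \subseteq \inv(\sigma)$, which is exactly the weak-order relation $\pi \leq_W \sigma$. The key observation making this clean is that an inversion of $\sigma$ and an incomparability (or reversed relation) in $P_\sigma$ are the same data: a pair $i<j$ is an inversion of $\sigma$ iff $\sigma_i > \sigma_j$ iff $i \not\preceq_{P_\sigma} j$ and $j \not\preceq_{P_\sigma} i$ restricted to the index order, so the combinatorial content is just that a linear extension may reverse only the incomparable pairs.

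Having established $\pi \in \mathcal{L}(P_\sigma) \iff \inv(\pi) \subseteq \inv(\sigma)$ at the level of the relevant permutations, I would finish by matching this with the lower interval. The only bookkeeping point is to track the inverses carefully: the Jordan--Hölder set is defined as the set of inverses of linear extensions, and the weak order condition in the lemma is $\sigma \leq_W \pi$ with the roles arranged so that the defining inequalities $\sigma_i^{-1} < \sigma_j^{-1}$ in $\mathcal{L}(P)$ correspond to inversions of the inverse permutation. I would therefore verify the one identity $\inv(\tau^{-1}) = \{(\tau_j, \tau_i) : (i,j) \in \inv(\tau)\}$ (inversion commutes with taking inverses up to relabelling by the values), which guarantees that $\inv$-containment is preserved under inversion, so the containment $\inv(\pi) \subseteq \inv(\sigma)$ for the linear-extension variable translates into $\tau \leq_W \sigma$ for $\tau$ ranging over $\mathcal{L}(P_\sigma)$.

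I expect the main obstacle to be purely notational rather than mathematical: the definition of $\mathcal{L}(P)$ as the set of \emph{inverses} of linear extensions, combined with the condition being phrased through $\sigma^{-1}$, means there are three inverse-taking operations to reconcile (the inverse in the definition of $\mathcal{L}$, the inverse in Definition~\ref{definition:permutation_poset} as read against the index order, and the inverse implicit in the inversion-set criterion for weak order). The danger is an off-by-an-inverse error that would silently replace $[\id,\sigma]_W$ by the image of $[\id,\sigma^{-1}]_W$. I would guard against this by checking the claim on a single small explicit example, say $\sigma = 231 \in \symS_3$, computing both $\mathcal{L}(P_\sigma)$ and $[\id,\sigma]_W$ by hand and confirming they agree set-theoretically before declaring the general argument complete. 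Since the result is cited from \cite{FelsnerWernisch1997MarkovChains}, an alternative is simply to invoke it, but the inversion-set proof above is short enough to include for completeness.
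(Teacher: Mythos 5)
Your overall strategy is the intended one: the paper gives no proof of its own (the lemma is cited from Felsner--Wernisch, with the remark that it follows by ``interpreting the cover relation in the weak order in terms of containment of inversion sets''), and your core computation is correct and is the real content of the lemma: $\pi$ is a linear extension of $P_\sigma$ if and only if $\inv(\pi) \subseteq \inv(\sigma)$, because the pairs a linear extension is allowed to reverse are exactly the incomparable pairs of $P_\sigma$, and for this poset those are exactly the inversions of $\sigma$.

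The gap is in your final translation step, and it is precisely the off-by-an-inverse trap you flagged. The identity $\inv(\tau^{-1}) = \{(\tau_j,\tau_i) : (i,j) \in \inv(\tau)\}$ is true, but it does \emph{not} imply that inversion-set containment is preserved under taking inverses: the relabelling $(i,j)\mapsto(\tau_j,\tau_i)$ depends on $\tau$, so from $\inv(\pi)\subseteq\inv(\sigma)$ and $(i,j)\in\inv(\pi)$ you only get $(\sigma_j,\sigma_i)\in\inv(\sigma^{-1})$, not the needed $(\pi_j,\pi_i)\in\inv(\sigma^{-1})$. Concretely, $\inv(132)\subseteq\inv(231)$, yet $\inv(132^{-1})=\inv(132)\not\subseteq\inv(312)=\inv(231^{-1})$; the map $\tau\mapsto\tau^{-1}$ is an isomorphism between the left and right weak orders, not an automorphism of either. (Your intermediate reformulation ``$\pi$ is a linear extension of $P_\sigma$ iff $\sigma^{-1}\leq_W\pi^{-1}$'' also has the inequality pointing the wrong way.) What your correct core step actually yields is $\mathcal{L}(P_\sigma)=\{\tau : \inv(\tau^{-1})\subseteq\inv(\sigma)\}$, which under the paper's convention (covers are swaps of adjacent \emph{positions}, so intervals are characterized by containment of inversion sets of \emph{inverses}) is the interval $[\id,\sigma^{-1}]_W$, not $[\id,\sigma]_W$. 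Indeed your own proposed sanity check exposes this: for $\sigma=231$ one has $\mathcal{L}(P_\sigma)=\{123,132,312\}$ while $[\id,231]_W=\{123,213,231\}$, and equality holds only after replacing $\sigma$ by $\sigma^{-1}=312$. To close the argument you must either adopt the opposite (left) weak-order convention with $\mathcal{L}(P)$ read as the linear extensions themselves, or absorb the discrepancy via the poset isomorphism $P_\sigma\cong P_{\sigma^{-1}}$ given by $i\mapsto\sigma_i$ --- a step your write-up never takes, so as written the proof does not reach the stated conclusion.
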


Recall that the \defin{width} of a poset is the size of the largest antichain of the poset. The following lemma is well-known. 

\begin{lemma}~\cite[Section 11.5]{ChanPal2023LinearExtensionsSurvey}\cite[pg. 5]{FelsnerWernisch1997MarkovChains}\label{lem:width_two_as_permutation_poset}
Every naturally labeled poset $P$ of width two is a permutation poset.
\end{lemma}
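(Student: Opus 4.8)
The plan is to prove the equivalent statement that a width-two (naturally labeled) poset $P$ has order dimension at most two, and then to read off the permutation from a two-element realizer. The key observation is that $P_\sigma$ is exactly the intersection of two linear orders on $[n]$: writing $i <_{\mathrm{nat}} j$ for $i < j$ and $i <_\sigma j$ for $\sigma_i < \sigma_j$, Definition~\ref{definition:permutation_poset} says $i \preceq j$ in $P_\sigma$ iff $i \leq_{\mathrm{nat}} j$ and $i \leq_\sigma j$. Thus a poset on $[n]$ is (isomorphic to) a permutation poset precisely when it can be written as the intersection of two linear orders, i.e.\ when it has order dimension at most two. So it suffices to bound the dimension of $P$ and then to normalize one of the two realizing orders to be the identity.

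First I would bound the dimension. By Dilworth's theorem a poset of width two is a union of two chains, so I write $[n] = C_1 \sqcup C_2$ with $C_1, C_2$ chains of $P$. Every incomparable pair then has one element in $C_1$ and one in $C_2$, so the incomparability graph $\mathrm{Inc}(P)$ is bipartite with parts $C_1, C_2$. A bipartite graph is always a comparability graph: orienting every edge from $C_1$ to $C_2$ gives a transitive orientation, since no two such oriented edges can be composed (the head of each lies in $C_2$ and the tail in $C_1$). By the Dushnik--Miller characterization, a poset whose incomparability graph is a comparability graph has dimension at most two; hence $\dim P \le 2$, and we may write $P = L_1 \cap L_2$ for two linear extensions $L_1, L_2$ of $P$.

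It remains to extract the permutation. Let $f \colon [n] \to [n]$ send each element to its rank in $L_1$; this relabeling turns $L_1$ into the natural order and is a natural labeling of $P$. Define $\sigma \in \symS_n$ by letting $\sigma_{f(x)}$ be the rank of $x$ in $L_2$. Then for elements with $f(x) < f(y)$, equivalently $x <_{L_1} y$, we have $x \preceq_P y$ iff $x <_{L_1} y$ and $x <_{L_2} y$ iff $\sigma_{f(x)} < \sigma_{f(y)}$, which is exactly the defining relation of $P_\sigma$. Hence $P$, relabeled by $f$, equals $P_\sigma$, so $P$ is a permutation poset.

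The main obstacle is the middle step, passing from width two to dimension two, and the crucial input is the recognition that the incomparability graph of a width-two poset is bipartite, hence trivially transitively orientable; this is exactly where the width-two hypothesis enters, and it is what fails for larger width. A secondary point worth flagging is that the identification with $P_\sigma$ requires the natural labeling induced by $L_1$: an arbitrary natural labeling of a width-two poset need not realize it on the nose as a permutation poset (for instance the three-element poset with the single relation $1 \prec 3$ and the remaining element incomparable to both), so the relabeling $f$ in the last step cannot be omitted.
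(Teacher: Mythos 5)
Your proof is correct, but note that the paper never proves this lemma at all: it is quoted from the cited references, so what you have written fills in an argument the paper omits. Your route --- Dilworth to split $P$ into two chains $C_1, C_2$, the observation that the incomparability graph is then bipartite and hence vacuously transitively orientable, the Dushnik--Miller characterization of posets of order dimension at most two, and finally reading off $\sigma$ from a two-element realizer --- is sound at every step. It is worth noting, though, that the black-box appeal to Dushnik--Miller can be bypassed: with $Q$ the orientation directing every incomparable pair from $C_1$ to $C_2$, one checks in a few lines that $L_1 = {\preceq_P} \cup\, Q$ and $L_2 = {\preceq_P} \cup\, Q^{-1}$ are already linear extensions of $P$ with $P = L_1 \cap L_2$ (transitivity uses only that incomparable pairs straddle the two chains and that $Q$ is transitive); this explicit merge of the two chains is the standard direct argument, so your detour costs a cited theorem where a short verification would do, but it buys the cleaner conceptual statement that permutation posets are exactly the posets of order dimension at most two. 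Your closing caveat is a genuine and worthwhile point rather than a pedantic one: as literally stated the lemma only holds up to relabeling, since your three-element example (relations generated by $1 \prec 3$, with $2$ incomparable to both) is naturally labeled of width two yet equal to no $P_\sigma$, though it is isomorphic to one. This does not damage the paper's application in Proposition~\ref{prop:Q_W_not_real_rooted}, because for naturally labeled posets the polynomial $W_P$ depends only on the isomorphism type (by $P$-partition theory), so passing to your relabeling $f$ changes nothing; but it does mean the lemma should be read as ``admits a natural labeling under which it equals a permutation poset,'' which is precisely what your final step produces.
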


By combining the previous two lemmata, we have the folowing.

\begin{proposition}\label{prop:Q_W_not_real_rooted}
    Let $Q^{\pi}_{W}(t) = \sum_{\sigma\in [\id, \pi]_W}t^{\des(\pi)}$. 
    There exists $\pi \in \symS_{17}$ such that $Q^{\pi}_{W}(t)$ is not real-rooted.
\end{proposition}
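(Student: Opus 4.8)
The plan is to deduce this from the failure of real-rootedness for descent polynomials of naturally labeled width-two posets, which the two preceding lemmas tie directly to lower intervals in the weak order. First I would record the identity that drives the argument: by Lemma~\ref{lem:permutation_poset_linear_extensions}, $[\id,\pi]_W = \mathcal{L}(P_\pi)$ for every $\pi$, so that $Q^{\pi}_{W}(t) = W_{P_\pi}(t)$; and by Lemma~\ref{lem:width_two_as_permutation_poset}, every naturally labeled poset $P$ of width exactly two equals $P_\pi$ for some permutation $\pi$. Consequently it suffices to exhibit a single naturally labeled poset $P$ on $17$ elements, of width two, whose descent polynomial $W_P(t)$ has a non-real root: the permutation $\pi\in\symS_{17}$ furnished by Lemma~\ref{lem:width_two_as_permutation_poset} then satisfies $Q^{\pi}_{W} = W_P$, which is precisely the assertion.

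To find such a $P$, I would exploit the rigidity of width-two posets. Each is a union of two chains together with prescribed order relations across them, and its linear extensions are exactly the interleavings of the two chains that respect all relations; this makes the family enumerable by computer up to a fixed number of elements, and for each poset $W_P$ is read off directly from its linear extensions. Running this enumeration and increasing the size, I would exhibit one poset on $17$ elements whose $W_P$ fails to be real-rooted (empirically this is the smallest size at which the phenomenon occurs, which is the source of the ``interesting structure'' advertised above). Two bookkeeping points need care: one must confirm that the poset produced has width exactly two, and one must translate it back to an explicit $\pi$ by the realization underlying Lemma~\ref{lem:width_two_as_permutation_poset} --- partition $[17]$ into its two chains and choose the values of $\pi$ so that they increase along each chain while forcing $\pi_i > \pi_j$ for incomparable $i<j$, in accordance with Definition~\ref{definition:permutation_poset}.

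The final step is certifying non-real-rootedness. With the poset in hand, $W_P(t)$ is an explicit polynomial with nonnegative integer coefficients, and it is enough either to locate a conjugate pair of complex roots numerically or to display a violation of Newton's inequalities among its coefficients; both are finite checks. The genuine obstacle is the middle step rather than this verification: since $W_P$ is real-rooted for every width-two poset on fewer elements, the search must be pushed far enough to reach the first failure, and the resulting poset must be isolated and described cleanly before it can be fed back through the reduction of the first paragraph to produce the required $\pi\in\symS_{17}$.
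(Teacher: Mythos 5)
Your proposal is correct and follows essentially the same route as the paper: the identical reduction $Q^{\pi}_{W}=W_{P_\pi}$ via Lemma~\ref{lem:permutation_poset_linear_extensions}, combined with Lemma~\ref{lem:width_two_as_permutation_poset} to realize a naturally labeled width-two poset on $17$ elements with non-real-rooted descent polynomial as a permutation poset. The only difference is that where you propose to rediscover such a poset by exhaustive search, the paper simply invokes Stembridge's counterexample to the Neggers--Stanley conjecture~\cite{Stembridge2007NeggersStanleyCounter} (itself originally found by precisely such a search) and translates it into the explicit permutation $\pi=[2,4,6,8,10,1,12,3,15,5,17,7,9,11,13,14,16]$.
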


\begin{proof}
    From Lemma~\ref{lem:permutation_poset_linear_extensions}, we have the following equality of polynomials: $Q^{\pi}_{W} = W_{P(\pi)}$. 
    Now, there exists a naturally labeled poset $P$ of width two such that $W_{P}$ is not real-rooted~\cite{Stembridge2007NeggersStanleyCounter}. By Lemma~\ref{lem:width_two_as_permutation_poset}, it follows that there exists $\pi$ such that $P = P_{\pi}$. It follows that $Q^{\pi}_{W}$ is not real-rooted. 
    Concretely, let $\pi \in \symS_{17}$
be defined by
\[
\pi = [2, 4, 6, 8, 10, 1, 12, 3, 15, 5, 17, 7, 9, 11, 13, 14, 16].
\]
Then, using $\mathtt{Mathematica}$, the polynomial $Q^{\pi}_{W}(t) = \sum_{\sigma\in [\id, \pi]_W}t^{\des(\pi)}$ is equal to 
\begin{equation}\label{eq:stembridge}
3 t^8+86 t^7+658 t^6+1946 t^5+2534 t^4+1420 t^3+336 t^2+32 t+1,    
\end{equation}
which has a pair of non-real roots at $-1.85884 \pm 0.14976i$.
\end{proof}

In~\cite{AlexanderssonJal2024RookMatroids}, the so-called poset -- skew shape correspondence was used to prove that $W_{P}$ is ultra-log-concave for naturally labeled width two posets $P$. This together with Lemma~\ref{lem:width_two_as_permutation_poset} and Proposition~\ref{prop:Q_W_not_real_rooted} makes the class of general permutation posets a good candidate for further investigation in this vein. 

\begin{conjecture}\label{conjecture:Q_w_is_ULC}
    Let $\pi \in \symS_{n}$. Then $Q_{W}^{\pi}$ is ultra-log-concave. 
\end{conjecture}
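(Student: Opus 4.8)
The plan is to reduce the conjecture to a statement about Lorentzian polynomials and to generalize the poset--skew shape correspondence of \cite{AlexanderssonJal2024RookMatroids} from width two to arbitrary width. By Lemma~\ref{lem:permutation_poset_linear_extensions} we have $Q^{\pi}_{W} = W_{P_{\pi}}$, and the posets arising as $P_{\pi}$ for $\pi \in \symS_n$ are precisely the naturally labeled posets of order dimension at most two: the condition $i \preceq j \iff i \leq j$ and $\sigma_i \leq \sigma_j$ exhibits $P_{\pi}$ as the intersection of the natural linear order with the one induced by $\sigma$, and conversely every naturally labeled intersection of two linear orders is a permutation poset. Lemma~\ref{lem:width_two_as_permutation_poset} is the width-two instance of this observation. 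Thus the conjecture is equivalent to the assertion that $W_P$ is ultra-log-concave for every naturally labeled two-dimensional poset $P$, and the width-two case already settled in \cite{AlexanderssonJal2024RookMatroids} is the model we aim to generalize.

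First I would recall the precise bridge between ultra-log-concavity and the theory of Lorentzian polynomials (Br\"and\'en--Huh): a univariate polynomial $\sum_k w_k t^k$ with nonnegative coefficients and no internal zeros is ultra-log-concave if and only if its bivariate homogenization $\sum_k w_k x^k y^{d-k}$ is Lorentzian. So it suffices to exhibit a Lorentzian polynomial that specializes to the homogenization of $W_P$, using that substituting nonnegative linear forms for the variables of a Lorentzian polynomial again yields a Lorentzian polynomial. The preliminary bookkeeping step is to record that $W_P$ has no internal zeros; for a naturally labeled poset this says that the descent numbers of the linear extensions form an integer interval, which can be extracted from standard $P$-partition theory and the $h^{*}$-description $W_P = h^{*}(\mathcal{O}(P))$ of the order polytope $\mathcal{O}(P)$.

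The heart of the argument is the construction of the right Lorentzian witness. In the width-two case the poset--skew shape correspondence assigns to $P$ a skew shape and realizes $W_P$ through the associated rook matroid, whose basis generating polynomial $\sum_{B} \prod_{i \in B} x_i$ is Lorentzian by the matroid--Lorentzian theorem; a suitable diagonal specialization then recovers the homogenization of $W_P$ and yields ultra-log-concavity. To carry this out in general I would attach to a two-dimensional poset $P_{\pi}$ a board recording its incomparabilities, indexed along the two linear orders realizing $P_{\pi}$, together with its rook/transversal structure, and then identify the descent-refined enumeration $W_{P_{\pi}}(\mathbf{x}) = \sum_{\sigma \in \mathcal{L}(P_{\pi})} \prod_{i \in \mathrm{Des}(\sigma)} x_i$ with a specialization of the corresponding multivariate generating polynomial, whose Lorentzian property would transfer to $W_{P_{\pi}}$.

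The main obstacle is that for width at least three the object produced by this correspondence need no longer be a matroid: the natural generalization is a polymatroid or an M-convex family whose ground structure is genuinely higher-dimensional, and there is no a priori reason that the associated generating polynomial has M-convex support or passes the Hessian-signature test defining Lorentzian polynomials. Establishing M-convexity of the support of $W_{P_{\pi}}(\mathbf{x})$, and then the spectral condition, is exactly the point at which the two-dimensionality of $P_{\pi}$ must enter essentially. Note that no interlacing or conditioning recursion of the kind used in Section~\ref{section:rr_rook_eulerian} can replace this: already for width two the Stembridge-type family underlying Proposition~\ref{prop:Q_W_not_real_rooted} destroys real-rootedness, and ultra-log-concavity, unlike the interlacing property, is not preserved under the sums that a conditioning recursion produces. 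This is precisely why routing the proof through the closure properties of Lorentzian polynomials, rather than through a direct recursion, appears unavoidable.
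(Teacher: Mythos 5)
This statement is a \emph{conjecture} in the paper: the authors offer no proof of it, only the observations that it is a special case of Brenti's conjecture and that the width-two case follows from the poset--skew shape correspondence of \cite{AlexanderssonJal2024RookMatroids}. Your proposal does not close this gap; it is a research plan whose central step is explicitly left unresolved. Concretely, after the (correct) reductions --- $Q^{\pi}_{W} = W_{P_{\pi}}$ via Lemma~\ref{lem:permutation_poset_linear_extensions}, the equivalence between ultra-log-concavity with no internal zeros and the Lorentzian property of the bivariate homogenization, and closure of Lorentzian polynomials under nonnegative linear substitutions --- everything hinges on producing a Lorentzian polynomial specializing to the homogenization of $W_{P_{\pi}}$. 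You acknowledge yourself that for width at least three the rook-matroid construction breaks down, that the natural replacement need not have M-convex support, and that verifying the Hessian condition is ``exactly the point at which the two-dimensionality of $P_{\pi}$ must enter essentially.'' Naming the missing object is not the same as constructing it: your argument reduces the conjecture to the assertion that $W_P$ is ultra-log-concave for all naturally labeled two-dimensional posets, which (since permutation posets are, up to relabeling, precisely the two-dimensional posets, and $W_P$ depends only on the isomorphism type) is a restatement of the conjecture, not a proof of it.

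Two smaller points deserve care if you pursue this route. First, your claim that ``every naturally labeled intersection of two linear orders is a permutation poset'' is false at the level of labelings: the poset on $[4]$ with relations $1 \preceq 3$ and $2 \preceq 4$ is naturally labeled and two-dimensional, yet no $\sigma$ realizes it as $P_{\sigma}$, because the natural order does not participate in any realizer; the statement only holds up to isomorphism, which suffices here but must be argued via the labeling-independence of $W_P$ for natural labelings. Second, the ``no internal zeros'' property of $W_P$, while expected, is itself a nontrivial input that you would need to pin down rather than cite loosely from $P$-partition theory. As it stands, the proposal is a sensible framing of why the conjecture is hard --- consistent with the paper's decision to leave it open --- but it is not a proof.
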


Note that this is a special case of Brenti's conjecture~\cite[Conjecture 1.1]{Brenti1989UnimodalAMS} rephrased in the language of the weak order. 
\bigskip

For width two posets, the $P$-Eulerian polynomial is closely related to the peak-generating polynomial of $\mathcal{L}(P)$ (see \cite[Proposition 1.1]{Stembridge2007NeggersStanleyCounter}). Based on this relation, we can modify $\pi$ in (\ref{eq:stembridge}) and obtain
\[
\widetilde{\pi} = [3, 5, 7, 9, 11, 2, 13, 4, 16, 6, 18, 8, 10, 12, 14, 15, 17, 18],
\]
such that the peak-generating polynomial 
\[
\sum_{\sigma \leq_W \widetilde{\pi}} t^{\peak(\sigma)}
\]
is also equal to \eqref{eq:stembridge}, and hence not real-rooted.
\bigskip

Recall that an \defin{excedance} of a permutation $\sigma$, is an instance $i$ where $\sigma_i > i$. The number of excedances of a permutation is denoted $\exc(\sigma)$.
It is well-known that the classical Eulerian polynomials can also be defined via the excedance statistic:
$A_n(t) = \sum_{\sigma \in \symS_n} t^{\exc(\sigma)}$.
It is therefore natural to ask if excedances also gives rise to real-rooted polynomials when considering intervals in the 
Bruhat order. Computer experiments suggests the following conjecture.
\begin{conjecture}
Let $\pi$ be 312-avoiding. Then
\begin{equation}
\sum_{\sigma \leq_B \pi} t^{\exc(\sigma)}
\end{equation}
is real-rooted.
\end{conjecture}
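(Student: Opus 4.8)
The plan is to mirror the interlacing-sequences induction used to prove Theorem~\ref{prop:Q_lambda_real_rooted}. By Proposition~\ref{prop:312_to_complete_rook}, for $312$-avoiding $\pi$ the interval $[\id,\pi]_B$ is exactly the set of complete rook placements on the associated Ferrers board $\lambda = \pi_\lambda$, so writing $R^\lambda(t) = \sum_{\sigma} t^{\exc(\sigma)}$ the conjecture becomes: for every Ferrers board $\lambda$, the polynomial $R^\lambda$ is real-rooted. I would first check whether $\exc$ and $\asc$ are actually equidistributed over $[\id,\pi]_B$: if they were, then $R^\lambda = Q^\lambda$ and real-rootedness would be immediate from Theorem~\ref{prop:Q_lambda_real_rooted}. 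I expect this to fail in general (otherwise the two statistics would not merit separate treatment), but determining the boards on which it holds is a cheap and informative first step.

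Assuming no such shortcut, I would introduce a refined family $R^\lambda_i(t)$ in analogy with \eqref{eq:ascQi}, recording those rook placements whose first rook sits in column $i$, and attempt to run the same induction on the size $n$ of the smallest square containing $\lambda$: grow $\lambda$ to $\lambda^+$ by prepending a row and incrementing the remaining rows exactly as in Lemma~\ref{lem:Q_lambda_recursion}, express the vector $(R^{\lambda^+}_i)_i$ as an explicit matrix times $(R^\lambda_j)_j$, and invoke \cite[Corollary 8.7]{Branden2015} to propagate the interlacing-sequence property. The base case and the final identity $t\cdot R^\lambda = R^{\lambda^+}_1$ carry over verbatim from the proof of Theorem~\ref{prop:Q_lambda_real_rooted}.

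The hard part is that the excedance statistic does not transform as cleanly as the ascent statistic under this deletion/insertion, precisely because $\exc$ is position-dependent. Tracking how the statistic changes when the first row and the column of its rook are deleted, a short computation gives $\exc(\sigma) = \exc(\sigma') + [\,i>1\,] - \#\{k : k < i-1,\ \sigma'_k = k+1\}$, where $\sigma'$ is the smaller placement and $i$ is the first column. The correction term counts the unit excedances of $\sigma'$ lying below the insertion point, a genuinely non-local feature of $\sigma'$; consequently the naive refinement by $\sigma_1$ alone does \emph{not} produce a transition matrix whose entries are the monomials $t$ and $1$ required by \cite[Corollary 8.7]{Branden2015}. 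I expect the crux to be enriching the refinement so that this correction is absorbed---for instance by additionally conditioning on the positions of the near-diagonal (weak-excedance) rooks, or by growing the board one column at a time so that the position-dependence of $\exc$ interacts more favorably with the recursion---and then verifying that the resulting transition matrix still satisfies Brändén's hypotheses.

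As an alternative route, I would search for an $\exc$-to-$\des$ equidistribution bijection on $[\id,\pi]_B$ that is compatible with the switch moves of Proposition~\ref{prop:312_to_complete_rook}; such a map would reduce the conjecture directly to Theorem~\ref{prop:Q_lambda_real_rooted}. The obstacle here is that the classical Foata-type transformation realizing $\exc = \des$ on all of $\symS_n$ need not send the Bruhat interval to itself, so one would have to construct a statistic-preserving bijection respecting the rook/Ferrers structure from scratch.
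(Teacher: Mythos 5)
First, a point of calibration: the statement you were asked to prove is not proved in the paper at all --- it appears there as a \emph{conjecture}, supported only by computer experiments (the authors even exhibit a non-$312$-avoiding $\pi$ for which the excedance polynomial fails to be real-rooted, to show the hypothesis is needed). So there is no proof of the paper's to compare yours against; the only honest benchmark is whether your proposal itself constitutes a proof, and it does not. What you have written is a research plan. The reduction to Ferrers boards via Proposition~\ref{prop:312_to_complete_rook} is correct, and your bookkeeping of how $\exc$ transforms under the row-and-column deletion of Lemma~\ref{lem:Q_lambda_recursion} is accurate (the correction term $\#\{k : k<i-1,\ \sigma'_k = k+1\}$ is exactly right), but every decisive step is left open. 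You yourself flag that the naive refinement by $\sigma_1$ does not yield a transition matrix with entries in $\{1,t\}$, so \cite[Corollary 8.7]{Branden2015} cannot be invoked, and the ``enriched refinement'' that would absorb the correction term is never constructed, let alone shown to satisfy Br\"and\'en's hypotheses. A plan that terminates at ``I expect the crux to be\dots'' is a statement of the problem, not a solution to it.

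Two concrete remarks on the routes you sketch. The equidistribution shortcut fails at the smallest nontrivial example: for $\pi = 231$ the interval $[\id,\pi]_B = \{123, 132, 213, 231\}$ has descent polynomial $1+3t$ and ascent polynomial $t^2+3t$, but excedance polynomial $1+2t+t^2 = (1+t)^2$; all are real-rooted, consistent with the conjecture, but they are genuinely different polynomials, so no reduction to Theorem~\ref{prop:Q_lambda_real_rooted} by equidistribution is possible. Your alternative route, a Foata-type bijection compatible with the Bruhat interval, is likewise only a wish: you correctly observe that the classical transformation need not preserve the interval, and you do not construct a replacement. In short, the statement remains exactly as open after your proposal as it was before it; what you have contributed --- a correct reformulation in terms of rook placements and a correct identification of where the interlacing machinery breaks for the position-dependent statistic $\exc$ --- would be a reasonable opening section of an attack on the conjecture, but it is not a proof.
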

Again, we cannot extend this conjecture to general $\pi$, 
as the polynomial
\begin{equation}
\sum_{\sigma \leq_B [4, 1, 5, 6, 8, 2, 3, 7]} t^{\exc(\sigma)} = 1 + 21 t + 140 t^2 + 290 t^3 + 127 t^4 + 5 t^5
\end{equation}
has some non-real roots.

\subsection{Multiset rook-Eulerian polynomials}

We now consider the setting of rook placements on boards wherein each row contains exactly one rook, 
but a single column can now contain more than one rook. The context for this set-up is Simion's multiset Eulerian polynomials \cite{Simion1984}, which were also shown to be real-rooted. These polynomials have since been refined and generalized in various directions; see \cite{MaPan2023} for instance. We offer a generalization that considers arbitrary Ferrers~boards. 

Let $\alpha = (\alpha_1,\dotsc,\alpha_k)$ be non-negative integers with total sum $n$,
and let $\lambda$ and $\mu$ be integer partitions such that $\lambda_i > \mu_i$ for all $i$.
We let $\mathcal{W}(\lambda/\mu,\alpha)$ be all words with $\alpha_i$ entries equal to $i$,
such that 
\[
   \mu_i < w_i \leq \lambda_i \quad \text{ for all $i=1,2,\dotsc,n$}.
\]
The vector $\alpha$ is the \defin{content} of each of the words in $\mathcal{W}(\lambda/\mu,\alpha)$. 
The partitions $\lambda/\mu$ are usually called a \defin{skew Ferrers board}.

\begin{example}\label{ex:multiset}
Consider all words with content $\alpha = (2,2,1)$ and the Ferrers board $\lambda=22233$. 
There are in total 30 words with the given content (permutations of $11223$), 
but only $12$ of these fit on the board $\lambda$:
\begin{align*}
\substack{\ytableaushort{{\rook}{\,},{\rook}{\,},{\,}{\rook},{\,}{\rook}{\,},{\,}{\,}{\rook}} \\ \asc(11223)=2 }
&&
\substack{\ytableaushort{{\rook}{\,},{\rook}{\,},{\,}{\rook},{\,}{\,}{\rook},{\,}{\rook}{\,}} \\ \asc(11232)=2 } 
&&
\substack{\ytableaushort{{\rook}{\,},{\,}{\rook},{\rook}{\,},{\,}{\rook}{\,},{\,}{\,}{\rook}} \\ \asc(12123)=3 } 
&&
\substack{\ytableaushort{{\rook}{\,},{\,}{\rook},{\rook}{\,},{\,}{\,}{\rook},{\,}{\rook}{\,}} \\ \asc(12132)=2 } 
&&
\substack{\ytableaushort{{\rook}{\,},{\,}{\rook},{\,}{\rook},{\rook}{\,}{\,},{\,}{\,}{\rook}} \\ \asc(12213)=2 } 
&&
\substack{\ytableaushort{{\rook}{\,},{\,}{\rook},{\,}{\rook},{\,}{\,}{\rook},{\rook}{\,}{\,}} \\ \asc(12231)=2 } 
\\
\substack{\ytableaushort{{\,}{\rook},{\rook}{\,},{\rook}{\,},{\,}{\rook}{\,},{\,}{\,}{\rook}} \\ \asc(21123)=2 } 
&&
\substack{\ytableaushort{{\,}{\rook},{\rook}{\,},{\rook}{\,},{\,}{\,}{\rook},{\,}{\rook}{\,}} \\ \asc(21132)=1 } 
&&
\substack{\ytableaushort{{\,}{\rook},{\rook}{\,},{\,}{\rook},{\rook}{\,}{\,},{\,}{\,}{\rook}} \\ \asc(21213)=2 } 
&&
\substack{\ytableaushort{{\,}{\rook},{\rook}{\,},{\,}{\rook},{\,}{\,}{\rook},{\rook}{\,}{\,}} \\ \asc(21231)=2 } 
&&
\substack{\ytableaushort{{\,}{\rook},{\,}{\rook},{\rook}{\,},{\rook}{\,}{\,},{\,}{\,}{\rook}} \\ \asc(22113)=1 } 
&&
\substack{\ytableaushort{{\,}{\rook},{\,}{\rook},{\rook}{\,},{\,}{\,}{\rook},{\rook}{\,}{\,}} \\ \asc(22131)=1 }
\end{align*}
\end{example}

Let $\alpha$ be a non-negative integer vector with total sum $n$.
The \defin{multiset rook-Eulerian polynomial} $R(\lambda/\mu,\alpha;t)$ is defined as
\begin{equation}
    R(\lambda/\mu,\alpha;t) \coloneqq \sum_{w \in \mathcal{W}(\lambda/\mu,\alpha)} t^{\asc(w)}.
\end{equation}

\begin{example}
The board $\lambda = 22233$ and $\alpha=(2,2,1)$ as in Example~\ref{ex:multiset} gives 
\[
  R(\lambda/\mu,\alpha;t) = t^3+8 t^2+3 t.
\]
\end{example}

As before, we introduce a refined version of the ascent-generating polynomials,
based on the first entry of the words:
\begin{equation}
    R_j(\lambda/\mu,\alpha;t) \coloneqq \sum_{\substack{w \in \mathcal{W}(\lambda/\mu,\alpha) \\ w_1 = j}} t^{\asc(w)}.
\end{equation}
It follows immediately that 
\begin{equation}
   R(\lambda/\mu,\alpha;t) = \sum_j R_j(\lambda/\mu,\alpha;t).
\end{equation}

The refined multiset Eulerian polynomials satisfy the recursion below.
Note that in this case, we do not insert an extra column in the diagram as in Lemma~\ref{lem:Q_lambda_recursion}.
We only prepend an additional first row.
\begin{lemma}
 Let $\lambda = (\lambda_1,\dotsc,\lambda_n)$ and let $\lambda^+ = (m,\lambda_1,\dotsc,\lambda_n)$
 where $m \leq \lambda_1$. Suppose $i \leq m$ and let $\alpha^+  = \alpha + \mathbf{e}_i$. 
 Then we have the recursion
\begin{equation}
 R_i(\lambda^+,\alpha^+;t) = \sum_{j=1}^{i} R_j(\lambda,\alpha;t) + t \sum_{j=i+1}^{\lambda_1} R_j(\lambda,\alpha;t).
\end{equation}
\end{lemma}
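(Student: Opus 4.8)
The plan is to follow the template of Lemma~\ref{lem:Q_lambda_recursion}, exploiting the stated simplification that here one only prepends a row rather than also deleting a column, so the reduction map is simply deletion of the first letter. First I would fix a word $w = w_1 w_2 \dotsm w_{n+1} \in \mathcal{W}(\lambda^+,\alpha^+)$ with $w_1 = i$ and pass to $w' = w_2 \dotsm w_{n+1}$. Because $\alpha^+ = \alpha + \evec_i$, removing the single copy of $i$ sitting in position $1$ turns the content $\alpha^+$ into $\alpha$, so $w'$ has content $\alpha$. The column constraints shift cleanly: for $k = 1,\dotsc,n$ one has $w'_k = w_{k+1}$ and $\lambda^+_{k+1} = \lambda_k$, whence $w'_k \leq \lambda_k$, i.e.\ $w' \in \mathcal{W}(\lambda,\alpha)$. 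Conversely, prepending $i$ to any $w' \in \mathcal{W}(\lambda,\alpha)$ is admissible precisely because $i \leq m = \lambda^+_1$, and recovers $w$. Hence deletion of the first letter is a bijection between the words counted by $R_i(\lambda^+,\alpha^+;t)$ and all of $\mathcal{W}(\lambda,\alpha)$.

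Next I would track the ascent statistic across this bijection. Prepending $i$ creates exactly one new adjacency, the pair $(w_1,w_2) = (i,w'_1)$ at position $1$; every remaining adjacency of $w$ is an adjacency of $w'$ shifted by one position and is therefore unchanged. Consequently $\asc(w) = \asc(w') + [\,i < w'_1\,]$, where $[\cdot]$ denotes the Iverson bracket. Summing $t^{\asc(w)}$ over all $w$ with $w_1 = i$ and conditioning on the first letter $w'_1 = j$ of the reduced word then yields
\[
R_i(\lambda^+,\alpha^+;t) = \sum_{j=1}^{\lambda_1} t^{[\,i<j\,]}\, R_j(\lambda,\alpha;t).
\]

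Finally I would unpack the two cases of the bracket: the exponent is $0$ when $j \leq i$ and $1$ when $j \geq i+1$, which splits the sum into $\sum_{j=1}^{i} R_j(\lambda,\alpha;t) + t\sum_{j=i+1}^{\lambda_1} R_j(\lambda,\alpha;t)$, exactly as claimed. I do not expect a genuine obstacle; the argument is purely combinatorial bookkeeping, and the bijection and ascent count can be verified directly against Example~\ref{ex:multiset}. The one point meriting care—and the sole place this lemma departs from Lemma~\ref{lem:Q_lambda_recursion}—is that, since columns may now hold several rooks, no column is removed when the top row is deleted; the inequality governing the new ascent is thus the direct comparison $i < w'_1$ rather than the shifted one of the strict-rook case, so the split falls at $j = i$ instead of $j = i-1$.
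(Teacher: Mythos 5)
Your proposal is correct and takes essentially the same route as the paper: the paper proves this lemma by simply invoking the argument of Lemma~\ref{lem:Q_lambda_recursion}, i.e.\ the delete-the-first-row bijection with bookkeeping of the single new adjacency, which is exactly what you spell out. You also correctly identify the one genuine point of difference from the strict-rook case --- no column is deleted, so the new ascent condition is the unshifted comparison $i < w'_1$ and the split falls at $j = i$ --- which is precisely why the paper's recursion here differs from \eqref{eq:ascent_gen_recursion}.
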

\begin{proof}
 This is the same argument as in Lemma~\ref{lem:Q_lambda_recursion}.
\end{proof}

The following conjecture generalizes a result by Simion~\cite[Sect. 2]{Simion1984}.
\begin{conjecture}\label{thm:multisetMain}
For Ferrers boards, the polynomial $R(\lambda,\alpha;t)$ is real-rooted. 
Moreover,
\begin{equation}
  R_{\lambda_1}(\lambda,\alpha;t), R_{\lambda_1-1}(\lambda,\alpha;t), \dotsc, R_2(\lambda,\alpha;t), R_1(\lambda,\alpha;t)
\end{equation}
forms an interlacing sequence.
\end{conjecture}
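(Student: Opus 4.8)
The plan is to mirror the proof of Theorem~\ref{prop:Q_lambda_real_rooted} as closely as possible, inducting on $n$, the number of rows of the board. The base case is a single row, for which the refined polynomials are monomials in $t$ and are trivially interlacing. For the inductive step one fixes a target content $\beta$ and a board $\lambda^+ = (m,\lambda_1,\dotsc,\lambda_n)$, and tries to exhibit the vector $\bigl(R_m(\lambda^+,\beta;t),\dotsc,R_1(\lambda^+,\beta;t)\bigr)$ as the image of an interlacing sequence under a matrix whose entries are $t$ and $1$ arranged in an analogous staircase pattern to $G_\lambda$, so that \cite[Corollary 8.7]{Branden2015} again applies. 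The recursion preceding the conjecture supplies exactly such a row-by-row description: conditioning on the first entry being $i$ and deleting the first row shows that $R_i(\lambda^+,\beta;t)$ is the dot product of the appropriate $\{t,1\}$-row with the refined vector of the sub-board.

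Here, however, lies the essential difficulty that is absent in the rook-Eulerian case. When we delete the first row, the content drops by $\mathbf{e}_i$, so for a \emph{fixed} target content $\beta$ the component $R_i(\lambda^+,\beta;t)$ is assembled from the sub-board polynomials $\bigl(R_j(\lambda,\beta-\mathbf{e}_i;t)\bigr)_{j}$ at content $\beta-\mathbf{e}_i$, and this content varies with the row index $i$. Consequently the output vector is \emph{not} a single interlacing-preserving matrix applied to a single interlacing input vector; rather, its consecutive components $R_i$ and $R_{i-1}$ are built from two different interlacing sequences, namely those for the contents $\beta-\mathbf{e}_i$ and $\beta-\mathbf{e}_{i-1}$. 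The single-content induction hypothesis therefore yields real-rootedness of each individual $R_i(\lambda^+,\beta;t)$, as a nonnegative $\{t,1\}$-combination of an interlacing sequence, but it does not by itself produce interlacing \emph{between} consecutive components.

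To close this gap I would strengthen the induction hypothesis to a joint, cross-content statement: for a fixed board $\lambda$ and any two contents that differ by moving one unit between adjacent colors, say $\gamma$ and $\gamma-\mathbf{e}_{p}+\mathbf{e}_{p+1}$, the corresponding refined sequences should satisfy a prescribed interlacing relation. The relevant pairs $\beta-\mathbf{e}_i$ and $\beta-\mathbf{e}_{i-1}$ differ by precisely such an adjacent-color move (take $p=i-1$), so this enriched hypothesis is exactly what is needed to compare $R_i(\lambda^+,\beta;t)$ with $R_{i-1}(\lambda^+,\beta;t)$ and to propagate interlacing through the inductive step. Establishing this cross-content interlacing — equivalently, controlling how the roots of the refined polynomials migrate as mass is shifted between adjacent columns — is the main obstacle, and is the reason the statement is posed here as a conjecture rather than proved outright.

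As an alternative route one could attempt a same-phase stable multivariate refinement, tracking the content by additional variables and then specializing, in the spirit of the argument following Theorem~\ref{prop:Q_lambda_real_rooted}. This must be handled with care: genuine stability of $Q^\lambda(\xvec)$ already fails for some boards, so only the weaker same-phase property is available, and it is not clear that a content-graded version of it survives the multiset constraints. For this reason I would regard the cross-content interlacing strategy as the more promising of the two.
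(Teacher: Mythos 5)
A point of order first: the statement you were given is posed in the paper as Conjecture~\ref{thm:multisetMain}, and the paper contains no proof of it. All the paper supplies is the recursion for the refined polynomials $R_i$ (the multiset analogue of Lemma~\ref{lem:Q_lambda_recursion}) together with the known special cases: Simion's real-rootedness result for rectangular boards~\cite{Simion1984} and the interlacing property for rectangles due to Ma and Pan~\cite[Theorem 1.11]{MaPan2023}. So there is no proof in the paper to compare yours against; and, to be clear, your proposal is not a proof either and does not claim to be one --- it is a diagnosis of the obstruction plus a proposed strengthening of the induction hypothesis that you leave unestablished.

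That said, your diagnosis is exactly right, and it is precisely why the argument of Theorem~\ref{prop:Q_lambda_real_rooted} does not transcribe to the multiset setting. There, the recursion keeps the object type fixed, so the whole output vector is a single $\{t,1\}$-matrix (satisfying \cite[Corollary 8.7]{Branden2015}) applied to a single interlacing input sequence. Here, the recursion reads $R_i(\lambda^+,\alpha+\mathbf{e}_i;t)=\sum_{j\le i}R_j(\lambda,\alpha;t)+t\sum_{j>i}R_j(\lambda,\alpha;t)$, so for a fixed target content $\beta$ on $\lambda^+$ the input vector feeding the component $R_i(\lambda^+,\beta;t)$ lives at content $\beta-\mathbf{e}_i$, which varies with $i$. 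Consequently one row of the staircase matrix acts on one interlacing sequence and the next row on a different one; the fixed-content induction hypothesis gives real-rootedness of each $R_i(\lambda^+,\beta;t)$ separately, but neither interlacing between consecutive components nor (therefore) real-rootedness of the sum $R(\lambda^+,\beta;t)$. Your bookkeeping for the proposed fix is also correct: $\beta-\mathbf{e}_{i-1}=(\beta-\mathbf{e}_i)-\mathbf{e}_{i-1}+\mathbf{e}_i$, so the two relevant contents differ by an adjacent-color move, and a cross-content interlacing hypothesis of the shape you describe is indeed what a successful induction would have to carry along. But that hypothesis is exactly what is not known --- it is the content of the open problem --- so your proposal, while a sound and accurate analysis of where the difficulty sits, leaves the statement exactly where the paper does: as a conjecture.
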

In the case when $\lambda$ is a rectangular Ferrers board, a result of Simion asserts that $R(\lambda,\alpha;t)$ is real-rooted for any fixed $\alpha$ \cite{Simion1984}.
The interlacing property in the rectangle case is covered by the more recent paper by Ma and Pan~\cite[Theorem 1.11]{MaPan2023}.
See also \cite[Eq. (4.7)]{BrandenHaglundVisontaiWagner2011} for a stable multivariate generalization of Simion's~result.

\begin{remark}
We cannot hope to obtain real-rootedness when extending the multiset setting to skew shapes. For the board $\lambda/\mu = 333321/11$ and content $\alpha = (2,2,2)$, 
we find that $R(\lambda/\mu,\alpha;t) = t^4+4 t^3+6 t^2+t$, which has some non-real roots.
\end{remark}

\section{Acknowledgements}
We thank Petter Bränd\'{e}n and Katharina Jochemko for their feedback and helpful comments. The second author acknowledges support from the Wallenberg AI, Autonomous Systems and Software Program funded by the Knut and Alice Wallenberg Foundation. 

\bibliographystyle{alphaurl}
\bibliography{bibliography}

\end{document}